\documentclass[12pt,leqno,a4paper]{amsart}
\usepackage{amssymb}
\usepackage{enumitem}

\textwidth160mm
\oddsidemargin5mm
\evensidemargin5mm

\hoffset -2pc

\newcommand{\FF}{{\mathbb{F}}}
\newcommand{\ZZ}{\mathbb{Z}}

\newcommand{\bG}{{\mathbf{G}}}
\newcommand{\bL}{{\mathbf{L}}}
\newcommand{\bM}{{\mathbf{M}}}
\newcommand{\bP}{{\mathbf{P}}}
\newcommand{\bU}{{\mathbf{U}}}

\newcommand{\cF}{{\mathcal{F}}}
\newcommand{\cH}{{\mathcal{H}}}
\newcommand{\cO}{{\mathcal{O}}}
\newcommand{\cS}{{\mathcal{S}}}

\newcommand{\AV}{{\operatorname{AV}}}
\newcommand{\rk}{{\operatorname{rk}}}
\newcommand{\IBr}{{\operatorname{IBr}}}
\newcommand{\Ind}{{\operatorname{Ind}}}
\newcommand{\Irr}{{\operatorname{Irr}}}
\newcommand{\GL}{{\operatorname{GL}}}
\newcommand{\PGL}{{\operatorname{PGL}}}

\newcommand{\SO}{{\operatorname{SO}}}
\newcommand{\Spin}{{\operatorname{Spin}}}
\newcommand{\Sp}{{\operatorname{Sp}}}
\newcommand{\ad}{{\operatorname{ad}}}
\newcommand{\mi}{{\operatorname{min}}}
\newcommand{\sgn}{{\operatorname{sgn}}}
\newcommand\RLG{{R_L^G}}

\newcommand{\tw}[1]{{}^{#1}\!}
\newcommand{\hlf}{\frac{1}{2}}

\newcommand{\Chevie}{{\texttt Chevie}{}}
\newcommand{\pl}{{\!+\!}}
\newcommand{\mn}{{\!-\!}}

\let\la=\lambda
\let\vhi=\varphi

\newtheorem{thm}{Theorem}[section]
\newtheorem{lem}[thm]{Lemma}
\newtheorem{prop}[thm]{Proposition}
\newtheorem{cor}[thm]{Corollary}

\theoremstyle{remark}
\newtheorem{rem}[thm]{Remark}

\begin{document}

\title[Bounding Harish-Chandra series]{Bounding Harish-Chandra series}

\date{\today}

\author{Olivier Dudas}
\address{Universit\'e Paris Diderot, UFR de Math\'ematiques,
B\^atiment Sophie Germain, 5 rue Thomas Mann, 75205 Paris CEDEX 13, France.}
\email{olivier.dudas@imj-prg.fr}

\author{Gunter Malle}
\address{FB Mathematik, TU Kaiserslautern, Postfach 3049,
         67653 Kaisers\-lautern, Germany.}
\email{malle@mathematik.uni-kl.de}

\thanks{The first author gratefully acknowledges financial support by
the ANR grant GeRepMod.
The second author gratefully acknowledges financial support by ERC
  Advanced Grant 291512 and SFB TRR 195.}

\keywords{}

\subjclass[2010]{Primary 20C33; Secondary  20C08}

\begin{abstract}
We use the progenerator constructed in \cite{DM17} to give a necessary
condition for a simple module of a finite reductive group to be cuspidal, or
more generally to obtain information on which Harish-Chandra series it can
lie in. As a first application we show the irreducibility of the smallest
unipotent character in any Harish-Chandra series. Secondly, we determine a
unitriangular approximation to part of the unipotent decomposition matrix of
finite orthogonal groups and prove a gap result on certain Brauer character
degrees.
\end{abstract}

\maketitle


\section{Introduction} \label{sec:intro}
Let $\bG$ be a connected reductive linear algebraic group defined over a
finite field $\FF_q$, with corresponding Frobenius endomorphism $F$.
The unipotent characters of the finite reductive group $\bG^F$ were
classified by Lusztig in \cite{LuB}. One feature of this classification is that
cuspidal unipotent characters belong to a unique family, whose numerical
invariant---Lusztig's $a$-function---gets bigger as the rank of $\bG$ increases.
\smallskip

For representations in positive characteristic $\ell \nmid q$, many more
cuspidal representations can occur and they need not all lie in the same family.
The purpose of this paper is to show that nevertheless the statement about
the $a$-function still holds. In other words, we show that unipotent
representations with small $a$-value must lie in a Harish-Chandra series
corresponding to a small Levi subgroup of $G$ (see Theorem~\ref{thm:low a}).
The proof uses the progenerator constructed in \cite{DM17} which ensures
that cuspidal unipotent modules appear in the head of generalised
Gelfand--Graev representations attached to cuspidal classes.
Our result then follows from the computation of lower bounds for the numerical
invariants attached to theses classes (see Proposition~\ref{prop:bound a}).
As a consequence we deduce in Theorem~\ref{thm:HC-irr} that the
unipotent characters with smallest $a$-value in any ordinary Harish-Chandra
series remain irreducible under $\ell$-modular reduction. This generalises our
result for unitary groups from \cite{DM15}.

In Sections~\ref{sec:orth-even} and~\ref{sec:orth-odd} we apply these
considerations to Harish-Chandra series of unipotent representations of the
finite spin groups $\Spin_n^{(\pm)}(q)$ for $a$-value at most $3$
respectively~4 and determine approximations of the corresponding partial
decomposition matrices, see Theorems~\ref{thm:decD+}, \ref{thm:decD-}
and~\ref{thm:decB}. From the partial triangularity of these decomposition
matrices we then derive a gap result for the corresponding unipotent Brauer
character degrees, see Corollaries~\ref{cor:boundD} and~\ref{cor:boundB}.
\smallskip

\section{Bounding Harish-Chandra series}   \label{sec:bound}

Let $\bG$ be a connected reductive group defined over $\FF_q$, with
corresponding Frobenius endomorphism $F$. Throughout this section we assume
that $p$, the characteristic of $\FF_q$, is good for $\bG$. We let $\ell$ be
a prime different from $p$ and $(K,\cO,k)$ denote a splitting $\ell$-modular
system for $G = \bG^F$.
The irreducible representations of $kG$ are partitioned into Harish-Chandra
series, but this partition is not known in general. In this section we give
a necessary condition for a unipotent module to lie in a given series, see
Theorem~\ref{thm:low a}.
It involves a numerical invariant coming from Lusztig's $a$-function.

\subsection{Unipotent support}   \label{subsec:support}
Given $\rho\in \Irr(G)$ and $C$ an $F$-stable unipotent class of $\bG$, we
denote by $\AV(C,\rho) = |C^F|^{-1}\sum_{g \in C^F}\rho(g)$ the average value
of $\rho$ on $C^F$. We say that $C$ is a \emph{unipotent support} of $\rho$
if $C$ has maximal dimension for the property that $\AV(C,\rho)\neq 0$.
Geck \cite[Thm.~1.4]{Ge96} has shown that whenever $p$ is good for $G$ every
irreducible character $\rho$ of $G$ has a unique unipotent support, which we
will denote by $C_\rho$. By Lusztig \cite[\S11]{Lu92} (see Taylor
\cite[\S14]{Tay14} for the extension to any good
characteristic), unipotent supports of unipotent characters are \emph{special
classes} (see below). They can be computed as follows: any family $\cF$ in
the Weyl group of $\bG$ contains a unique special representation, which is
the image under the Springer correspondence of the trivial local system on a
special unipotent class $C_\cF$. Then $C_\cF$ is the common unipotent
support of all the unipotent characters in~$\cF$.

\subsection{Duality}
In \cite[III.1]{SpB} Spaltenstein studied an order-reversing map $d$ on the set
of unipotent classes of $\bG$ partially ordered by inclusion of closures.
When $p$ is good, the image of $d$ consists of the so-called special unipotent
classes, and $d$ restricts to an involution on this subset of classes.
\smallskip

The effect of $d$ on unipotent supports of unipotent characters (which
are special classes) can be computed as follows. Let $\rho$ be a unipotent
character of $G$ and let $\rho^*$ be its Alvis--Curtis dual (see
\cite[\S8]{DM91}). If $\cF$ is the family of $\Irr(W)$ attached to
$\rho$, then $\cF \otimes \sgn$ is the family attached to $\rho^*$.
Let $\phi$ be the unique special character in $\cF$. Via the Springer
correspondence, it corresponds to the trivial local system on $C_\rho$.
Then by \cite[\S3]{BV85}, the character $\phi \otimes \sgn$ corresponds
to some local system on $d(C_\rho)$. Moreover, $\phi \otimes \sgn$ is special
and therefore that local system is trivial except when $\cF$ is one
of the exceptional families in type $E_7$ and $E_8$ (see for example
\cite[\S11.3 and \S12.7]{Ca}). Consequently, for any unipotent character
$\rho$ of $G$ we have
\begin{equation}   \label{eq:(1)}
  d(C_\rho) = C_{\rho^*}
\end{equation}
This property does not hold in general for other series of characters.

\subsection{Wave front set}
Given a unipotent element $u \in G$, we denote by $\Gamma_u^G$, or
simply $\Gamma_u$, the
\emph{generalised Gelfand--Graev representation} associated with $u$. It is an
$\cO G$-lattice. The construction is given for example in \cite[\S3.1.2]{Kaw82}
(with some extra assumption on $p$) or in \cite[\S5]{Tay14}.
The first elementary properties that can be deduced are
\begin{itemize}
 \item if $\ell \neq p$, then $\Gamma_u$ is a projective $\cO G$-module;
 \item if $u$ and $u'$ are conjugate under $G$ then $\Gamma_u\cong\Gamma_{u'}$.
\end{itemize}
The character of $K\Gamma_u$ is the \emph{generalised Gelfand--Graev character}
associated with $u$. We denote it by $\gamma_u^G$, or simply $\gamma_u$. It
depends only on the $G$-conjugacy class of $u$. When $u$ is a regular unipotent
element then $\gamma_u$ is a usual Gelfand--Graev character as in
\cite[\S14]{DM91}.
\smallskip

The following theorem by Lusztig \cite[Thm.~11.2]{Lu92} and
Achar--Aubert \cite[Thm.~9.1]{AA07} (see Taylor \cite{Tay14} for
the extension to good characteristic) gives a condition on the unipotent
support of a character to occur in a generalised Gelfand-Graev character.

\begin{thm}[Lusztig, Achar--Aubert, Taylor]   \label{thm:wave}
 Let $\rho \in \Irr(G)$ and $\rho^* \in \Irr(G)$ its Alvis-Curtis dual. Then
 \begin{itemize}
  \item[\rm(a)] there exists $u\in C_{\rho^*}^F$ such that
   $\langle\gamma_u;\rho\rangle \neq 0$;
  \item[\rm(b)] if $C$ is an $F$-stable unipotent conjugacy class of $\bG$
   such that $\langle \gamma_{u};\rho\rangle \neq 0$ for some $u \in {C}^F$
   then $C \subset \overline{C_{\rho^*}}$.
 \end{itemize}
\end{thm}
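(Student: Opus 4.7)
The plan is to compute the multiplicity $\langle \gamma_u, \rho \rangle$ in terms of character-sheaf Fourier data, and to extract from that computation both the upper bound in part~(b) and the non-vanishing in part~(a). The main technical input is Kawanaka's explicit construction of $\gamma_u$ as an induced character $\Ind_{V_u}^G \psi_u$, where $V_u$ is a unipotent subgroup of a parabolic attached to an $\mathfrak{sl}_2$-triple through $u$ and $\psi_u$ is a character built from a Killing-form pairing with (a Lie-algebra lift of) $u$.

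First I would apply Frobenius reciprocity to rewrite $\langle \gamma_u, \rho \rangle$ as a weighted sum of character values of $\rho$ on $V_u$, and then, by regrouping over $G$-conjugacy classes, as a linear combination
\[
\langle \gamma_u, \rho \rangle \;=\; \sum_{C'} a_{u,C'}\, \AV(C', \rho)
\]
with combinatorially explicit coefficients $a_{u,C'}$. Spaltenstein's dimension formula, together with the structure of the $\mathfrak{sl}_2$-grading, bounds the classes $C'$ that can appear and identifies an extremal class where $a_{u,C'}$ is explicitly nonzero. Next I would use Alvis--Curtis duality to swap the roles of $\rho$ and $\rho^*$, coupling the expression to the duality $d$ on unipotent classes. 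The crucial ingredient is Lusztig's Fourier transform on the space of unipotently supported class functions: under this transform, generalised Gelfand--Graev characters become, up to sign and normalisation, characteristic functions of unipotent classes, and character sheaves are diagonalised. Combined with the duality equation~\eqref{eq:(1)} identifying $d(C_\rho) = C_{\rho^*}$, this turns $\langle \gamma_u, \rho \rangle$ into a Fourier coefficient controlled by the unipotent support of $\rho^*$.

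Part~(b) then falls out: any class $C$ not contained in $\overline{C_{\rho^*}}$ lies outside the support region of the relevant Fourier coefficient, forcing the multiplicity to vanish for $u \in C^F$. Part~(a), the non-vanishing assertion, is more subtle: one must isolate the leading term at $C = C_{\rho^*}$ and show that it is a nonzero multiple of $\AV(C_{\rho^*}, \rho^*)$, which is itself nonzero by the defining property of the unipotent support. The hardest step---and what I expect to be the main obstacle---is verifying that the lower-order contributions in the Fourier expansion do not cancel the leading term, and that the whole Fourier-transform machinery goes through uniformly in arbitrary good characteristic. This is precisely where the three source papers diverge: Lusztig's original argument handled large good $p$, Achar--Aubert refined the analysis for classical groups, and Taylor reworked Kawanaka's construction and the associated character-sheaf Fourier transform so as to cover all good primes $p$ uniformly.
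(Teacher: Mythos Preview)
The paper does not give its own proof of this statement: Theorem~\ref{thm:wave} is quoted from the literature, with explicit attributions to Lusztig \cite[Thm.~11.2]{Lu92}, Achar--Aubert \cite[Thm.~9.1]{AA07}, and Taylor \cite{Tay14}. There is therefore no in-paper argument against which to compare your sketch; the authors treat the result as a black box and only \emph{apply} it (in the proof of Theorem~\ref{thm:low a}).

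Your outline is broadly in the spirit of the cited sources---Kawanaka's induced-character model for $\gamma_u$, Frobenius reciprocity, and Lusztig's Fourier transform on unipotently supported class functions are indeed the engine of the original proof. Two remarks on accuracy. First, your invocation of equation~\eqref{eq:(1)} is problematic: the paper explicitly notes that $d(C_\rho)=C_{\rho^*}$ is established only for \emph{unipotent} characters and ``does not hold in general for other series'', whereas Theorem~\ref{thm:wave} is stated for arbitrary $\rho\in\Irr(G)$. The actual arguments in \cite{Lu92,AA07,Tay14} work directly with the wave front set and do not pass through \eqref{eq:(1)}; rather, \eqref{eq:(1)} is a \emph{consequence} of Theorem~\ref{thm:wave} in the unipotent case. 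Second, your historical gloss is slightly off: Achar--Aubert's contribution is not a refinement ``for classical groups'' but a removal of Lusztig's cleanness hypothesis on character sheaves (which at the time was only known in large characteristic), while Taylor's contribution is to redo Kawanaka's construction and the attendant Fourier analysis so that everything is valid uniformly for good~$p$.
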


Here $\overline{C}$ denotes the Zariski closure of a conjugacy class $C$.

\subsection{Lusztig's $a$-function}   \label{subsec:a-fun}
Let $\rho\in \Irr(G)$. By \cite[4.26.3]{LuB}, there exist nonnegative integers
$n_\rho, N_\rho, a_\rho$ with $n_\rho \geq 1$ and $N_\rho \equiv\pm1$ (mod $q$)
such that
$$ \dim \rho = \frac{1}{n_\rho} q^{a_\rho} N_\rho.$$
Moreover, by \cite[13.1.1]{LuB}, the integer $a_\rho$ is equal to the dimension
of the Springer fibre at any element of the unipotent support $C_\rho$ of
$\rho$. More precisely, for any $u \in C_\rho$ we have
\begin{equation}   \label{eq:(2)}
  a_\rho = \frac{1}{2}(\dim C_\bG(u) - \rk(\bG)).
  \end{equation}

Let $\chi\in\ZZ\Irr(G)$. We define $a_\chi$ to be the minimum over the
$a$-values of the irreducible \emph{unipotent} constituents of $\chi$ (and
$\infty$ if there is none). If
$\vhi\in\IBr(G)$ is unipotent we define $a_\vhi$ to be the $a$-value of its
projective cover. By extension we set $a_S = a_\vhi$ if $S$ is a simple
$kG$-module with Brauer character $\vhi$.

\subsection{Harish-Chandra series and $a$-value}
It results from Lusztig's classification of unipotent characters that there is
at most one family of $\Irr(G)$ containing a cuspidal unipotent character. In
addition, the unipotent support attached to such a family is self-dual with
respect to $d$ and has a large
$a$-value compared to the rank of the group. We give a generalisation of this
second statement to positive characteristic using the progenerator constructed
in \cite{DM17}.

For this, recall from \cite{GM96} that an $F$-stable unipotent class $C$ of
$\bG$ is said to be \emph{cuspidal} if there exists no proper $1$-split Levi
subgroup $\bL$ of $\bG$ and $u \in C \cap \bL^F$ such that the natural map
$C_\bL(u)/C_\bL(u)^\circ \rightarrow C_\bG(u)/C_\bG(u)^\circ$ is an isomorphism.
For $C$ an $F$-stable cuspidal unipotent class of $\bG$ and $d(C)$ the dual
class we set (see~(\ref{eq:(2)}))
$$a_{d(C)} := \frac{1}{2}(\dim C_\bG(u) - \rk(\bG))\qquad
  \text{where $u \in d(C)$}.$$

\begin{thm}   \label{thm:low a}
 Let $\bG$ be connected reductive. Let $S$ be a simple $k\bG^F$-module lying in
 the Harish-Chandra series above a cuspidal pair $(\bL^F,X)$ of $\bG^F$.
 Then there exists an $F$-stable unipotent class $C$
 of $\bL$ which is cuspidal for $\bL_\ad$ and such that $a_{d(C)}\le a_S$.
\end{thm}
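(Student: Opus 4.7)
The plan is to combine the progenerator result of \cite{DM17} with the wavefront-set description in Theorem~\ref{thm:wave}, interpreting the resulting closure inclusion through Spaltenstein duality. Since $X$ is a cuspidal simple $k\bL^F$-module, the progenerator constructed in \cite{DM17} provides an $F$-stable unipotent class $C$ of $\bL$, cuspidal for $\bL_\ad$, and an element $u\in C^F$ such that $X$ occurs as a quotient of the GGGR $\Gamma_u^{\bL^F}$. The construction of GGGRs by induction from a suitable unipotent subgroup is compatible with Harish-Chandra induction, giving an isomorphism $\Gamma_u^{\bG^F}\cong R_\bL^\bG(\Gamma_u^{\bL^F})$ of $\cO\bG^F$-modules. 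As $S$ lies in the head of $R_\bL^\bG(X)$ (by Frobenius reciprocity for the adjoint pair $(R_\bL^\bG,{}^*\!R_\bL^\bG)$ together with the definition of the Harish-Chandra series), exactness of $R_\bL^\bG$ applied to $\Gamma_u^{\bL^F}\twoheadrightarrow X$ yields a surjection $\Gamma_u^{\bG^F}\twoheadrightarrow S$. Since $\Gamma_u^{\bG^F}$ is a projective $\cO\bG^F$-module, the projective cover $P_S$ of $S$ is a direct summand; hence every irreducible unipotent character $\rho$ contributing to $\chi_{P_S}$ is a constituent of $\gamma_u^{\bG^F}$.

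Choose such a $\rho$ with $a_\rho=a_S$, which is possible by the definition of $a_S$ in \S\ref{subsec:a-fun}. Theorem~\ref{thm:wave}(b) applied to $\langle\gamma_u^{\bG^F},\rho\rangle\neq 0$ gives $C_u^\bG\subset\overline{C_{\rho^*}^\bG}$, where $C_u^\bG$ denotes the $\bG$-conjugacy class of $u$. Using~(\ref{eq:(1)}) to rewrite $C_{\rho^*}^\bG=d(C_\rho^\bG)$ within $\bG$ and applying the order-reversing involution $d$ on special classes (here $C_\rho^\bG$ is special, being the unipotent support of a unipotent character), this closure inclusion translates into $C_\rho^\bG\subset\overline{d(C_u^\bG)}$, whence $\dim C_\rho^\bG\leq\dim d(C_u^\bG)$. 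Rewriting via~(\ref{eq:(2)}) this reads $a_\rho\geq\tfrac12\bigl(\dim\bG-\rk\bG-\dim d(C_u^\bG)\bigr)$.

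The final step is to match this lower bound with $a_{d(C)}$, that is, to show $\dim C_\bG(w)\geq\dim C_\bL(v)$ for $w\in d(C_u^\bG)$ and $v\in d(C)$ (where the first $d$ is Spaltenstein duality in $\bG$ and the second in $\bL$). In type $A$ this is immediate: the formula $\dim C_{\GL_n}(u_\mu)=\sum_i(\mu'_i)^2$ applied to both sides and the observation that $u\in\bL$ has Jordan partition in $\bG$ equal to the multiset union of its Jordan partitions in the $\GL$-factors of $\bL$ shows that both sides equal $\sum_{i,j}(\mu^{(i)}_j)^2$. In general, I expect this to reflect a compatibility between Spaltenstein duality in $\bL$ and in $\bG$ for elements of cuspidal classes, and its verification---likely through the classification of cuspidal unipotent classes together with a careful bookkeeping of component groups and centralizer dimensions---is where I anticipate the main difficulty. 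Granting it, the inequality chain collapses to $a_{d(C)}\leq a_\rho=a_S$, completing the proof.
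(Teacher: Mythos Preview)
Your approach diverges from the paper's at a crucial point and leaves a genuine gap. By passing to $\Gamma_u^{\bG^F}\cong R_\bL^\bG(\Gamma_u^{\bL^F})$ and applying Theorem~\ref{thm:wave} \emph{in $\bG$}, you end up comparing $a_{d(C)}$ (where $d$ is Spaltenstein duality in $\bL$) with a quantity involving $d(C_u^\bG)$ (duality in $\bG$). As you yourself note, bridging these requires a compatibility statement between $d_\bL$ and $d_\bG$ on cuspidal classes which you do not prove and which is not obvious; your type~$A$ sketch does not extend, and invoking ``classification plus bookkeeping'' is not a proof.

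The paper avoids this difficulty entirely by never leaving $\bL$ for the wavefront argument. One applies Theorem~\ref{thm:wave} to the constituents $\chi$ of $\gamma_u^{\bL^F}$ (equivalently, to the unipotent constituents of a lift of $P_X$), obtaining $C\subset\overline{C_{\chi^*}}$ and hence, via~\eqref{eq:(1)} and duality in $\bL$, $a_{d(C)}\le a_\chi$ for every such $\chi$; thus $a_{d(C)}\le a_X$. The passage to $\bG$ is then the elementary observation that $P_S$ is a summand of $R_\bL^\bG(P_X)$ and that Harish-Chandra induction does not decrease the $a$-value, giving $a_X\le a_S$. This two-step argument (wavefront in $\bL$, then $a$-value monotonicity under $R_\bL^\bG$) replaces your unproved duality compatibility by a standard fact.
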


\begin{proof}
By \cite[Thm.~2.3]{DM17}, there exists an $F$-stable unipotent class $C$ of
$\bL$ which is cuspidal for $\bL_\ad$ and $u \in C^F$ such that the generalised
Gelfand--Graev module $\Gamma_u^L$ maps onto $X$. Let $P_X$ be the projective
cover of $X$. Since $\Gamma_u^L$ is projective, it must contain $P_X$ as a
direct summand. In particular, any irreducible unipotent constituent $\chi$ of
a lift $\tilde P_X$ to $\cO$ of $P_X$ is also a constituent of $\gamma_u^L$.
For such a character $\chi$ we have $C\subseteq \overline{C_{\chi^*}}$ by
Theorem~\ref{thm:wave}. By applying Spaltenstein's duality $d$, we deduce from
\eqref{eq:(1)} that $\overline{d(C)} \supseteq d(C_{\chi^*}) = C_\chi$.
Consequently $a_{d(C)} \leq a_\chi$ for every unipotent constituent
$\chi$ of $\tilde P_X$, therefore $a_{d(C)} \leq a_X$.
\smallskip

It remains to see that $a_X \leq a_S$. This follows from the fact that
$P_S$ is a direct summand of $R_L^G(P_X)$ and that Harish-Chandra
induction can not decrease the $a$-value (see for example \cite[Cor.~8.7]{LuB}).
\end{proof}

In Sections~\ref{sec:orth-even} and~\ref{sec:orth-odd} we will apply
Theorem~\ref{thm:low a} to determine the Harish-Chandra series of unipotent
characters with small $a$-value in the finite orthogonal groups. For this, it
will be useful to derive lower bounds for $a_{d(C)}$.

\section{Cuspidal classes: minimality and induction}

\subsection{Cuspidal classes and $a$-value}
Cuspidal unipotent classes for simple groups of adjoint type in good
characteristic were classified by Geck and the second author
\cite[Prop.~3.6]{GM96}. From the classification we can see that they are all
special classes of rather large dimension (compared to the rank of the group).
We give here, for every classical type, the minimal dimension of the Springer
fibre over the dual of any cuspidal unipotent class in good characteristic.

\begin{prop} \label{prop:bound a}
 Assume that $p$ is good for $\bG$ and $Z(\bG)$ is connected. Then there is a
 unique $F$-stable cuspidal class $C_{\min}$ which is contained in the closure
 of every $F$-stable cuspidal class of~$\bG$. Furthermore, the values of
 $a_{\min} := a_{d(C_{\min})}$ for simple classical types are given as follows:
 \begin{enumerate}
  \item[\rm(a)] for type $A_m$: $a_{\min} = \frac{1}{2}m(m+1)$;
  \item[\rm(b)] for type $\tw2A_m$, where $m+1=\binom{s+1}{2}+d$ with
   $0\le d\le s$:
   $$a_{\min} = \frac{1}{6}s(s^2-1)+\frac{1}{2}d(2s+1-d);$$
  \item[\rm(c)] for type $B_m$, where $m=s(s+1)+d$ with $0\le d\le 2s+1$:
   $$a_{\min} = \frac{1}{6}s(s+1)(4s-1)+\begin{cases}
    d(2s+1-d)& \text{ if $d\le s$},\\
    d(4s+2-d)-s(2s+1)& \text{ if $s\le d$};\end{cases}$$
  \item[\rm(d)] for type $C_m$, where $m=s(s+1)+d$ with $0\le d\le 2s+1$:
   $$a_{\min} =\frac{1}{6}s(s+1)(4s-1)+\begin{cases}
    d(2s+2-d)& \text{ if $d\le s+1$},\\
    d(4s+3-d)-(s+1)(2s+1)& \text{ if $s+1\le d$};\end{cases}$$
  \item[\rm(e)] for types $D_m$ and $\tw2D_m$, where $m=s^2+d$ with
   $0\le d\le 2s$:
   $$a_{\min} = \frac{1}{6}s(s-1)(4s+1)+\begin{cases}
    d(2s+1-d)& \text{ if $d\le s$},\\
    d(4s-d)-s(2s-1)& \text{ if $s\le d$}.\end{cases}$$
 \end{enumerate}
\end{prop}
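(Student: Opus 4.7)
The plan is to proceed by a case analysis on the type of $\bG$, using the Geck--Malle classification of cuspidal unipotent classes in classical simple adjoint groups in good characteristic \cite[Prop.~3.6]{GM96}. In each type I would carry out three steps: (i) identify the unique cuspidal class $C_{\min}$ that lies in the closure of every other $F$-stable cuspidal class, (ii) describe its Spaltenstein dual $d(C_{\min})$ as an explicit partition, and (iii) compute $a_{d(C_{\min})}$ from \eqref{eq:(2)} using the standard formulas for centraliser dimensions of unipotent elements in classical groups in terms of the Young diagram of the associated Jordan partition, namely $\sum_i (\lambda'_i)^2$ in type $A$ and $\frac{1}{2}(\sum_i (\lambda'_i)^2 \pm r(\lambda))$ in the isometry-group cases (with $r(\lambda)$ counting odd parts and $\pm$ depending on orthogonal vs.\ symplectic).

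For type $A_m$ the assertion is immediate: the only cuspidal class of $\GL_{m+1}$ is the regular class, whose Spaltenstein dual is the trivial class, and so $a_{\min} = \frac{1}{2}(\dim\bG - \rk\bG) = \frac{1}{2}m(m+1)$. For the remaining classical types my expectation is that $d(C_{\min})$ takes the shape of a ``staircase'' $(1,3,5,\ldots,2s\pm 1)$ in types $B$, $C$, $D$ (or a triangular staircase $(1,2,\ldots,s)$ in $^2A$), padded with additional parts determined by the residue $d$: for $d\le s$ the extra $d$ boxes are absorbed by lengthening or doubling existing parts of the staircase, while for $d>s$ new small parts must be introduced at the bottom. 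This case split should account precisely for the two branches of the piecewise formulas (b)--(e), and once the partition shape is pinned down the evaluation of $a_{d(C_{\min})}$ in step (iii) reduces to a direct polynomial simplification in $s$ and $d$.

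The combinatorial heart of the argument---and the main obstacle---is step (i). Among the cuspidal partitions enumerated by \cite[Prop.~3.6]{GM96} one must exhibit a \emph{unique} minimum in the dominance order (which for classical types coincides with the closure order on unipotent classes, up to the familiar subtlety for ``very even'' degenerate classes in type $D$) and rule out the possibility of several incomparable minima. I expect this to follow from a careful analysis of the parity constraints on part multiplicities imposed by the cuspidality condition. For the twisted types $^2A_m$ and $^2D_m$ one moreover has to verify that the candidate $C_{\min}$ is itself $F$-stable and that no smaller $F$-stable cuspidal class exists; in type $^2D_m$ this involves handling the very even partitions, which split into pairs of geometric classes permuted by Frobenius. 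With $C_{\min}$ identified in each case, step (ii) is then routine via the explicit description of Spaltenstein's involution as transposition followed by an appropriate $B$-, $C$-, or $D$-collapse.
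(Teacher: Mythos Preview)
Your outline matches the paper's approach closely for types $A$, $\tw2A$, $B$, and $C$: the paper also reduces to simple adjoint type, uses the Geck--Malle list of cuspidal partitions, identifies the unique dominance-minimum among them, passes to the conjugate partition for the Spaltenstein dual, and then evaluates the centraliser-dimension formula. Two points are worth flagging.

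First, for step~(i) the paper does not merely appeal to parity constraints: it exhibits explicit ``lowering moves'' on cuspidal partitions. In type $\tw2A$ these replace $(\la_i,\la_{j+1})$ by $(\la_i-1,\la_{j+1}+1)$ at a pair of gaps, or $(\la_i,\la_{i+1})$ by $(\la_i-1,\la_{i+1}+1)$ at a single large gap; in type $C$ the analogous moves are phrased on the conjugate partition $\la^*=(1^{r_1},2^{r_2},\ldots)$, reducing a multiplicity $r_i\ge4$ or a pair of consecutive doublets. Types $B$ and $D$ are then handled by stripping the largest column and reducing to type $C$. You should have these concrete moves in hand rather than leave them as an expectation. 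Note also that the paper points out the class $(m,m)$ in type $D_m$ is \emph{not} cuspidal (correcting \cite{GM96}), so the very-even subtlety you mention simply does not arise for cuspidal classes.

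Second, your route for type $D$ genuinely differs from the paper's. The paper explicitly says that in types $D_m$, $\tw2D_m$ there is no elementary ``transpose-and-collapse'' description it is willing to use, and instead it bypasses step~(ii) entirely: it writes down the symbol of the special character attached to $C_{\min}$ via the Springer correspondence, and computes $a_{d(C_{\min})}$ as $m(m-1)-A(\cS_{\min})$ using a closed formula for the $A$-value of a type-$D$ symbol together with \eqref{eq:(1)}. Your proposed route via $D$-collapse is a legitimate alternative and should give the same answer, but it is not what the paper does, and you would need to handle the bookkeeping of the collapse carefully to extract the two-branch formula in~(e).
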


\begin{proof}
Arguing as in \cite[\S3.4]{GM96} we may and will assume that $\bG$ is simple
of adjoint type. For exceptional types there always is a cuspidal family, and
then our claim is in \cite[Thm.~3.3]{GM96}. For type $A_m$ the regular class
$C$ is the only cuspidal class by \cite[Prop.~3.6]{GM96}. Its dual class is
the trivial class, for which $a_{d(C)}=\frac{1}{2}(\dim\PGL_{m+1}-m)=m(m+1)/2$.
\smallskip

For the other classical types,
to any conjugacy class $C$ corresponds a partition $\la$ coming from
the elementary divisors of any element in $C$ in the natural matrix
representation of a classical group isogenous to $\bG$. If $C$ is cuspidal,
then depending on the type of $\bG^F$, the partition $\la$ satisfies the
following properties (see \cite[Prop.~3.6]{GM96}):
\begin{itemize}
 \item type $\tw2A_m$: $\la$ is a partition of $m+1$ into distinct parts;
 \item type $B_{m}$: $\la$ is a partition of $2m+1$ into odd parts, each
  occurring at most twice;
 \item type $C_{m}$: $\la$ is a partition of $2m$ into even parts, each
  occurring at most twice.
 \item type $D_m$ and $\tw2D_m$: $\la$ is a partition of $2m$ into odd parts,
  each occurring at most twice, and $\la\ne(m,m)$ in type $D_m$.
\end{itemize}
(It was claimed erroneously in loc.~cit.~that for type $D_m$, $m$ odd, classes
with label $(m,m)$ are cuspidal, but in fact they do lie in a Levi subgroup of
type $A_{m-1}$.)  \par
First consider type $\tw2A_m$. We claim that the class $C_\mi$ labelled by the
partition $\la=(s+1,\ldots,s-d+2,s-d,\ldots,1)$ is contained in the closure of
all other cuspidal classes. So let $\la$ be a partition with all parts distinct
and first assume that $\la_i\ge\la_{i+1}+2$ and $\la_j\ge\la_{j+1}+2$ for some
$i<j$. Then the partition $\la'$ with
$$\la_i'=\la_i-1,\ \la_{j+1}'=\la_{j+1}+1,$$
and $\la_l'=\la_l$ for all other indices~$l$, is smaller in the dominance order,
hence labels a cuspidal class which is smaller in the partial order of
unipotent classes. Similarly, if $\la_i\ge\la_{i+1}+3$ then $\la'$ with
$$\la_i'=\la_i-1,\ \la_{i+1}'=\la_{i+1}+1,$$
again labels a smaller cuspidal class. Application of these two operations
eventually leads to the label of $C_\mi$, so our claim follows.
\par
In type $C_m$, we claim that the cuspidal class $C_\mi$ such that
$r_i\in\{0,2\}$ for all $i$, and in that range there is at most one~$i$ with
$r_i=r_{i+1}=2$, is containd in the closure of all other cuspidal classes.
Here, $\la^* = (1^{r_1},2^{r_2},\ldots,h^{r_h})$ denotes the conjugate
partition of $\la$ written in exponential notation. Indeed, if $C$ has a label
with $r_i\ge4$ for some $i$, then the partition $\mu$ with $\mu^*$ such that
$$r_{i-1}'=r_{i-1}+2,\ r_i'=r_i-4,\ r_{i+1}'=r_{i+1}+2,$$
labels a smaller cuspidal class. Similarly, if $r_i=r_{i+1}=2$ and
$r_j=r_{j+1}=2$ for some $i<j$, then the partition $\mu$ with
$$r_{i-1}'=r_{i-1}+2,\ r_i'=r_i-2,\ r_{j+1}'=r_{j+1}-2,\ r_{j+2}'=r_{j+2}+2,$$
labels a smaller class.
\par
In type $B_m$, respectively $D_m$ and $\tw2D_m$, let $C_\mi$ denote the
cuspidal class such that removing the biggest part $h=2s+1$ (resp.~$h=2s$) of
$\la^*$ we obtain the label of the minimal cuspidal class in type $C_{m-h}$.
Then as before it is easy to see that $C_\mi$ lies in the closure of all
cuspidal classes. This completes the proof of the first assertion.
\par
To determine $a_{d(C_\mi)}$ note that in all types but $D_m$ and $\tw2D_m$,
the dual of the special class corresponding to $\la$ is the special class
corresponding to the conjugate $\la^*$ of $\la$ (see
\cite[\S12.7 and \S13.4]{Ca}). Then the claimed expression for $a_{d(C_\mi)}$
follows from the centraliser orders given in \cite[\S 13.1]{Ca}: in type
$\tw2A_m$
$$\dim C_\bG(u)=\sum_{i\geq1}(r_i+r_{i+1}+\cdots)^2 - 1$$
for $u\in d(C)$ labelled by $\la^* = (1^{r_1},2^{r_2},\ldots,h^{r_h})$,
in type $C_m$ by
$$\dim C_\bG(u)=\frac{1}{2}\big(\sum_{i\geq1}(r_i+r_{i+1}+\cdots)^2
   +\sum_{i\equiv1(2)}r_i\big),$$
and in type $B_m$ by
$$\dim C_\bG(u)=\frac{1}{2}\big(\sum_{i\geq1}(r_i+r_{i+1}+\cdots)^2
   -\sum_{i\equiv1(2)}r_i\big).$$
\par
In types $D_m$, $\tw2D_m$ there is no elementary operation to deduce the
effect of Lusztig--Spaltenstein
duality on unipotent classes. However one can use \eqref{eq:(1)} to compute
$a_{d(C)}$ from the $a$-value of the Alvis--Curtis dual $\rho^*$ of the unique
special unipotent character with unipotent support~$C$. Let
$$\cS=\begin{pmatrix} a_1& \ldots& a_{2s-1}\\ a_2& \ldots& a_{2s}\end{pmatrix}$$
be the symbol of an irreducible character of $W(D_m)$. It is special if and
only if $a_1\le a_2\le\ldots\le a_{2s}$. By \cite[(5.15)]{MaU}, for example
(noting that the last ``$-$'' sign in that formula should be a ``$+$''),
the $A$-value of the corresponding unipotent principal series character
$\rho_\cS$ is given by
$$A(\cS)=\sum_{i<j}\max\{a_i,a_j\}-\sum_{i=1}^{s-1}\binom{2i}{2}
  -2\sum_{i=1}^{2s}\binom{a_i+1}{2}+m^2,$$
and the $a$-value of the Alvis--Curtis dual $\rho_{\cS^*}=D_G(\rho_\cS)$ of
$\rho_\cS$ is
$$a(\cS^*)=m(m-1)-A(\cS).$$
Now let $\la$ be a partition of $2m$ labelling a cuspidal unipotent class $C$
for $D_m$. By the description of cuspidal unipotent classes, $\la$ has an even
number of non-zero parts, say $\la=(\la_1\ge\la_2\ge\cdots\ge\la_{2l}>0)$. The
Springer correspondent of $C$ is obtained as follows, see \cite[\S2D]{GM00}:
it is labelled by a symbol $\cS$ as above with entries
$a_i=(\la_{2l+1-i}-1)/2+\lfloor\frac{i}{2}\rfloor$, $1\le i\le 2l$. The symbol
$\cS_\mi$ of the character $\rho_\mi$ corresponding to the class $C_\mi$ is
then given by
$$\begin{aligned}
  &\binom{0\quad \ldots\quad 2(s-1)}{1\ \ldots\ 2(s-d)-1\ 2(s-d+1)\ \ldots\ 2s}&
  \quad &\text{for $0\le d\le s$, resp.}\\
  &\binom{0\ \ldots\ 2(2s-d-1)\ 2(2s-d)+1\ \ldots 2s-1}{2\quad \ldots\quad 2s}&
  \quad &\text{for $s\le d\le 2s$}.
\end{aligned}$$
(Here, dots signify that the entries in between are meant to increase in steps
of~2. In particular, all $a_i-a_{i-2}=2$ except at one single position.)
Application of the above formula for $a(\cS_\mi^*)$ yields the assertion.
\end{proof}

\begin{rem}
Theorem~\ref{thm:low a} and Proposition~\ref{prop:bound a} show that unipotent
Brauer characters with ``small'' $a$-value must lie in ``small'' Harish-Chandra
series, that is, in Harish-Chandra series corresponding to Levi subgroups of
small semisimple rank. In particular, cuspidal modules have a large $a$-value
compared to the rank of the group. \par
More precisely, by Proposition~\ref{prop:bound a} the $a$-values of cuspidal
unipotent classes in classical groups grow roughly like $c m^{\frac{3}{2}}$
with the rank~$m$, where $c=\frac{\sqrt{2}}{3}$ for type $\tw2A_m$ and
$c=\frac{2}{3}$ for types $B_m,C_m,D_m$ and $\tw2D_m$. This is the same order
of magnitude as the $a$-value of a cuspidal unipotent character. Since the
latter is a fixed point of Alvis--Curtis duality, this shows that cuspidal
unipotent Brauer characters only occur ``in the lower half'' of the
decomposition matrix of a group of classical type.
\end{rem}

In Table~\ref{tab:low-a} we give the minimal $a$-value for classical groups of
rank at most~10.

\begin{table}[htbp]
\[\begin{array}{c|cccccccccc}
  m& 1& 2& 3& 4& 5& 6& 7& 8& 9& 10\\
\hline
 A_m& 1& 3& 6& 10& 15& 21& 28& 36& 45& 55\\
 \tw2A_m& -& 1& 3& 4& 4& 7& 9& 10& 10& 14\\
  B_m& -& 1& 3& 6& 7& 7& 11& 13& 18& 21\\
  C_m& -& 1& 4& 5& 7& 7& 12& 15& 16& 20\\
  D_m,\tw2D_m& -& -& -& 3& 7& 9& 12& 13& 13& 19\\
\end{array}\]
 \caption{$a_{d(C_{\min})}$ in classical types}  \label{tab:low-a}
\end{table}

\subsection{Induction of cuspidal classes}
Let us recall the construction of induced unipotent classes by
Lusztig--Spaltenstein \cite{LuSp}. Let $\bP = \bL \bU$ be a parabolic subgroup
of $\bG$ with Levi complement $\bL$. Given any unipotent class $C$ of $\bL$,
the variety $C \bU$ is irreducible and consists of unipotent elements only.
Therefore there exists a unique unipotent class $\widetilde C$ of $\bG$ such
that $\widetilde C \cap C\bU$ is dense in $C\bU$. The class $\widetilde C$ is
called the induction of $C$ from $\bL$ to $\bG$. It is shown in \cite{LuSp}
that it depends only on $(C,\bL)$ and not on $\bP$. As in \cite{LuSp} we will
denote it by $\Ind_\bL^\bG(C)$.

\begin{prop}   \label{prop:ind-cusp}
 Assume that $p$ is good for $\bG$ and $Z(\bG)$ is connected. Let $\bL$ be a
 $1$-split Levi subgroup of~$\bG$. If $C$ is an $F$-stable cuspidal unipotent
 class of $\bL$ then $\Ind_\bL^\bG(C)$ is an $F$-stable cuspidal unipotent
 class of~$\bG$.
\end{prop}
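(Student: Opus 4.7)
The plan is to argue by contradiction: $F$-stability is automatic from the functoriality of Lusztig--Spaltenstein induction, so the real work lies in preserving cuspidality. First, since $\bL$ is 1-split, there is an $F$-stable parabolic $\bP = \bL\bU$, and the defining property of $\widetilde C := \Ind_\bL^\bG(C)$---namely that $\widetilde C \cap C\bU$ be open dense in $C\bU$---is preserved by $F$ because $C$ and $\bU$ are. Hence $F(\widetilde C) = \widetilde C$.

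For cuspidality, suppose towards a contradiction that $\widetilde C$ is not cuspidal in $\bG$: there exist a proper 1-split Levi $\bM \subsetneq \bG$ and $\tilde u \in \widetilde C \cap \bM^F$ for which $C_\bM(\tilde u)/C_\bM(\tilde u)^\circ \to C_\bG(\tilde u)/C_\bG(\tilde u)^\circ$ is an isomorphism. In particular $\dim C_\bM(\tilde u) = \dim C_\bG(\tilde u)$. Letting $C^*$ denote the $\bM$-class of $\tilde u$, the induced class $\Ind_\bM^\bG(C^*)$ has generic centraliser dimension equal to $\dim C_\bM(\tilde u)$, hence to $\dim C_\bG(\tilde u)$; since $\tilde u$ lies in the closure of $\Ind_\bM^\bG(C^*)$ and already attains this generic dimension, it must lie in the open dense part, so $\widetilde C = \Ind_\bM^\bG(C^*)$.

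I would then iterate within $\bM$: if $C^*$ is not cuspidal in $\bM$, the same dimension argument yields a proper 1-split Levi $\bM_1 \subsetneq \bM$ and a class $C_1$ in $\bM_1$ with $C^* = \Ind_{\bM_1}^\bM(C_1)$, hence $\widetilde C = \Ind_{\bM_1}^\bG(C_1)$ by transitivity of $\Ind$ (noting that a 1-split Levi of a 1-split Levi is 1-split). After finitely many steps this reaches a cuspidal pair $(\bM_0, C_0)$ in a 1-split Levi $\bM_0 \subseteq \bM \subsetneq \bG$ with $\widetilde C = \Ind_{\bM_0}^\bG(C_0)$. The final step is to invoke uniqueness of the Lusztig--Spaltenstein cuspidal support so that $(\bM_0, C_0)$ is $\bG$-conjugate to $(\bL, C)$, forcing $\bL$ (after $F$-rational conjugation) into $\bM \subsetneq \bG$ and yielding, via transitivity, a proper 1-split Levi of $\bL$ witnessing non-cuspidality of $C$---a contradiction.

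The main obstacle is exactly this uniqueness step: uniqueness of cuspidal support is standard in Lusztig's generalised Springer correspondence when arbitrary Levis are permitted, whereas Geck--Malle cuspidality is tested only against 1-split Levis, so one must either refine the uniqueness within the 1-split framework (ensuring the conjugating element can be taken $F$-rationally) or bypass it altogether. A robust fallback is a case-by-case verification: the explicit combinatorial classification of cuspidal classes recalled in Proposition~\ref{prop:bound a}, together with the well-known combinatorial description of Lusztig--Spaltenstein induction on partitions in each classical type (and the Bala--Carter tables in exceptional types), reduces the claim to a direct check that the partition-theoretic constraints defining cuspidality are inherited by the induced label.
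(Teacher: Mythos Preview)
Your contradiction argument has a gap earlier than the uniqueness step you flag. The claim ``In particular $\dim C_\bM(\tilde u) = \dim C_\bG(\tilde u)$'' does not follow from the Geck--Malle non-cuspidality condition, which only asserts that the map of component groups $C_\bM(\tilde u)/C_\bM(\tilde u)^\circ \to C_\bG(\tilde u)/C_\bG(\tilde u)^\circ$ is an isomorphism; this says nothing about the dimensions of the connected centralisers (take $\tilde u=1$ in any proper Levi for a trivial instance where both component groups vanish but the dimensions differ). Without that equality your identification $\widetilde C = \Ind_\bM^\bG(C^*)$ collapses, and with it the whole iteration. Even granting everything up to $(\bM_0,C_0)\sim(\bL,C)$ with $\bM_0\subseteq\bM\subsetneq\bG$, the final sentence does not produce a contradiction: you have only placed a conjugate of $\bL$ inside a proper Levi of $\bG$, but $\bL$ was allowed to be proper in $\bG$ from the start, and nothing here yields a proper $1$-split Levi \emph{of $\bL$} meeting $C$ with the required component-group condition.

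The paper makes no attempt at a uniform proof and proceeds entirely via your fallback. After reducing to $\bG$ simple adjoint and $\bL$ maximal proper $1$-split, it strips off untwisted type-$A$ factors of $\bL$ (whose unique cuspidal class is regular, hence induced from a torus), so that in classical type $X_m$ one may take $\bL$ of type $X_{m-\delta}$; then Spaltenstein's formula gives that induction sends $\lambda$ to $(\lambda_1+2,\lambda_2,\ldots)$, which visibly still satisfies the cuspidality constraints from the proof of Proposition~\ref{prop:bound a}. The exceptional types are handled by an explicit \Chevie\ computation (Table~\ref{tab:ind-cusp-exc}), using that $\Ind$ corresponds to $j$-induction of Springer representations. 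So your fallback sketch is correct in outline, but the exceptional-type check is a genuine calculation, not a formality.
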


\begin{proof}
By standard reductions, we can assume that $\bG$ is simple of adjoint type
(see for example \cite[\S3.4]{GM96}). Also, by transitivity of the induction
of unipotent classes, we can assume without loss of generality that $\bL$ is
a maximal proper $1$-split Levi subgroup of~$\bG$.
Assume that $\bL$ has an untwisted type $A_r$-factor. Its only cuspidal
unipotent class is the regular class, which is obtained by induction of the
trivial class of a maximal torus. By transitivity of induction we may thus
replace this factor by a maximal torus, and then replace that Levi subgroup by
any other maximal $1$-split Levi subgroup of $\bG$ containing it. In
particular, our claim holds for groups of untwisted type $A$.
\par
Thus, if $\bG$ is simple of classical type $X_m$ (where $m$ is the rank), then
we may assume that $\bL$ is a group of type $X_{m-\delta}$, where $\delta = 2$
if $X_m = \tw2A_m$ and $\delta = 1$ otherwise.
Let $\lambda$ be a partition labelling a cuspidal unipotent class $C$ of $\bL$.
It satisfies the properties listed in the beginning of the proof of
Proposition~\ref{prop:bound a}, depending on the type of $\bL$. Under these
conditions, if follows from \cite[\S 7.3]{SpB} that the partition labelling
$\Ind_\bL^\bG(C)$ is $(\lambda_1+2,\lambda_2,\ldots)$, and so the corresponding
class is also cuspidal.
\par
If $\bG$ is simple of exceptional type, then by our previous reduction we are
in one of the cases listed in Table~\ref{tab:ind-cusp-exc}. The induction of
classes is then easily determined using for example the \Chevie\ \cite{ChM}
command {\tt UnipotentClasses} and the fact that induction of classes is just
$j$-induction of the corresponding Springer representations,
see \cite[4.1]{Sp85}.
\end{proof}

\begin{table}[htbp]
\[\begin{array}{c|ccc}
  B_3 & 7 & 51^2 & 3^21 \\
  C_3 & 6 & 42 \\\hline
  F_4 & F_4 & F_4(a_1) & F_4(a_2)\\
\end{array}\]\smallskip
\[\begin{array}{c|ccc}
  D_5 & 91 & 73 & 531^2\\\hline
  E_6 & E_6 & E_6(a_1) & E_6(a_3)\\
\end{array}\]\smallskip
\[\begin{array}{c|ccccc}
  \tw2A_5 & 6 & 51 & 42 & & 321 \\
  \tw2D_4 & 71 & 53 & & 3^21^2\\\hline
  \tw2E_6 & E_6 & E_6(a_1) & D_5 & E_6(a_3) & D_5(a_1)\\
\end{array}\]\smallskip
\[\begin{array}{c|ccccccc}
  D_6 & 11.1 & 93 & 75 & 731^2 & 5^21^2 & 53^21 \\
  E_6 & E_6 & E_6(a_1) & & E_6(a_3) & & & D_4(a_1) \\\hline
  E_7 & E_7 & E_7(a_1) & E_7(a_2) & E_7(a_3) & E_6(a_1) & E_7(a_4) & E_7(a_5)\\
\end{array}\]\smallskip
\[\begin{array}{c|ccccccccc}
  D_7 & 13.1 & 11.3 & 95 & 931^2 & 751^2 & 73^21 & 5^2 31 \\
  E_7 & E_7 & E_7(a_1) & E_7(a_2) & E_7(a_3) & E_6(a_1) & E_7(a_4) & & E_7(a_5)  & A_4+A_1 \\\hline
  E_8 & E_8 & E_8(a_1) & E_8(a_2) & E_8(a_3) & E_8(a_4) & E_8(b_4) & E_8(a_5) & E_8(b_5) &  E_6(a_1)+A_1\\
\end{array}\]
 \caption{Some induced cuspidal classes}  \label{tab:ind-cusp-exc}
\end{table}

\section{Irreducibility of the smallest character in a Harish-Chandra series}
We extend our irreducibility result in \cite[Thm.~A]{DM17}.

\begin{lem}   \label{lem:all types}
 Assume that $\bG$ is simple of adjoint type. Let $\bL\le\bG$ be a $1$-split
 Levi subgroup with a cuspidal unipotent character $\la$.
 Then $a_\la\le a_{d(C)}$ for all $F$-stable cuspidal unipotent classes $C$
 of $\bG$, with equality occurring if and only if $\bL=\bG$.
\end{lem}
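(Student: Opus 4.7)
The plan is to combine Proposition~\ref{prop:bound a} with a direct case-by-case computation. Since Lusztig--Spaltenstein duality $d$ is order-reversing and $C_{\min}$ lies in the closure of every $F$-stable cuspidal unipotent class of $\bG$, we have $a_{d(C)}\geq a_{d(C_{\min})}=:a_{\min}(\bG)$ for every such $C$. The lemma therefore reduces to the single inequality $a_\lambda\leq a_{\min}(\bG)$, with equality if and only if $\bL=\bG$.

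Next, I would analyse the possible shapes of $\bL$. Since $\bG$ is simple of adjoint type, $\bL$ is $1$-split, and $\bL$ carries a cuspidal unipotent character, the semisimple part of $\bL$ decomposes as $\bL_0\times\bL_A$, where $\bL_0$ is of the same Dynkin type as $\bG$ (at smaller semisimple rank when $\bG$ is classical, or on a short prescribed list when $\bG$ is exceptional) and $\bL_A$ is a product of factors of type $A$. Since groups of type $A_k$ with $k\geq1$ admit no cuspidal unipotent characters, the $A$-factors must have semisimple rank zero, so $\lambda$ is essentially the cuspidal unipotent character of $\bL_0$, and $a_\lambda$ equals its $a$-value. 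For the equality case $\bL=\bG$, I would verify that the unipotent support of the cuspidal unipotent character of a simple adjoint classical group $\bG$ is precisely $d(C_{\min})$, so that $a_\lambda=a_{d(C_{\min})}=a_{\min}(\bG)$; this amounts to matching the symbol (resp.\ partition) parametrising the cuspidal character against the label of $C_{\min}$ read off from the proof of Proposition~\ref{prop:bound a}, which is exactly the $d=0$ case of formulas~(b)--(e). For the few exceptional $\bG$ carrying a cuspidal unipotent character this is settled by inspection of Lusztig's tables, e.g.\ via \cite[\S13]{Ca}.

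For $\bL\lneq\bG$ the statement reduces to the strict inequality $a_{\min}(\bL_0)<a_{\min}(\bG)$. The ``cuspidal ranks'' at which $\bL_0$ can carry a cuspidal unipotent character are precisely the $d=0$ values in Proposition~\ref{prop:bound a}. Using the piecewise formulas there, I would show that starting from any such cuspidal rank, a single increment of the rank already raises $a_{\min}$ strictly (for example by $2s$ when passing from $B_{s(s+1)}$ to $B_{s(s+1)+1}$); combined with the fact that $a_{\min}$ is non-decreasing in the rank, this gives the desired strict inequality for every larger $\rk(\bG)$. The exceptional types amount to a finite check against Table~\ref{tab:ind-cusp-exc} and the tables of cuspidal unipotent characters.

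The main obstacle will be carrying out this strict-inequality step cleanly in classical types, because $a_{\min}$ is not globally strictly monotonic in the rank---for instance $a_{\min}(B_5)=a_{\min}(B_6)=7$ in Table~\ref{tab:low-a}---so one cannot simply invoke monotonicity. The point to exploit is that the ``cuspidal ranks'' are sparse (they are triangular, respectively square, values of $m$), and at each of them the one-step jump in the piecewise formulas of Proposition~\ref{prop:bound a} is genuinely positive, which forces $a_{\min}$ at every strictly larger rank to exceed the cuspidal-rank value.
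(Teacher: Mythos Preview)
Your plan is correct and is essentially the paper's own proof, which is extremely terse: for classical types it simply points to the explicit formulas of Proposition~\ref{prop:bound a}, and for exceptional types it tabulates $a_{d(C_{\min})}$ against the possible $a_\lambda$ (this is Table~\ref{tab:low-a-exc}, not Table~\ref{tab:ind-cusp-exc}). Your write-up just fleshes out what ``see Proposition~\ref{prop:bound a}'' means in practice.

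It is worth knowing that the paper also records, in the Remark immediately after the lemma, a more conceptual alternative that bypasses entirely the monotonicity issue you flagged as the main obstacle. One takes the cuspidal support $C_\lambda$ of $\lambda$ in $\bL$ and forms $\widetilde C_\lambda=\Ind_\bL^\bG(C_\lambda)$; then $a_\lambda=a_{C_\lambda}=a_{\widetilde C_\lambda}$ by \cite{LuSp}, and $\widetilde C_\lambda$ is cuspidal in $\bG$ by Proposition~\ref{prop:ind-cusp}, so $C_{\min}\subseteq\overline{\widetilde C_\lambda}$ and hence $a_\lambda\le a_{C_{\min}}=a_{d(C_{\min})}$. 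Equality forces $\widetilde C_\lambda=C_{\min}$, so $C_\lambda$ is self-dual, $\lambda$ is Alvis--Curtis self-dual, and $\bL=\bG$. This avoids any piecewise comparison of the formulas in Proposition~\ref{prop:bound a}.
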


\begin{proof}
For classical types, see Proposition~\ref{prop:bound a}. For exceptional types,
the minimal $a_{d(C)}$ and the possible $a_\la$ are easily listed using
\cite[Prop.~3.6]{GM96} and the tables in \cite[\S13]{Ca};
they are given in Table~\ref{tab:low-a-exc}.
\end{proof}

\begin{rem}
For an alternative proof of the lemma based on Proposition~\ref{prop:ind-cusp}
consider the (cuspidal) unipotent support $C_\la$ of $\la$ and the induced
class $\widetilde C_\la = \Ind_\bL^\bG(C_\la)$. Then
$a_\la=a_{d(C_\la)}=a_{C_\la}=a_{\widetilde C_\la}$ by
\cite[Thm.~1.3]{LuSp}.
Moreover, by Proposition~\ref{prop:ind-cusp}, the class $\widetilde C_\la$
is cuspidal. Therefore the minimal cuspidal class $C_\mi$ from
Proposition~\ref{prop:bound a} lies in the closure of $\widetilde C_\la$,
and hence $a_{\la} \leq a_{C_\mi} = a_{d(C_\mi)}$ with equality only when
$C_\mi = \widetilde C_\la$. In that case the unipotent support of $\la$
is self-dual, therefore the character $\la$ is its own Alvis-Curtis dual
and $\bL = \bG$.
\end{rem}

\begin{table}[htbp]
\[\begin{array}{c|cccccccccl}
  & C_\mi& a_{d(C_\mi)}& \text{possible } a_\la\\
\hline
 G_2& G_2(a_1)& 1& 0,1\\
 F_4& F_4(a_3)& 4& 0, 1, 4\\
 E_6,\tw2E_6& D_4(a_1)& 7& 0,3,4,7\\
 E_7& A_4+A_1& 11& 0,3,7,11\\
 E_8& E_8(a_7)& 16& 0,3,7,11,16\\
\end{array}\]
 \caption{Minimal $a_{d(C)}$ in exceptional types}  \label{tab:low-a-exc}
\end{table}

The following extends our result \cite[Prop.~4.3]{DM15} for unitary groups
(which was shown using the known unitriangularity of the decomposition
matrix) to all reductive groups; here instead we use the irreducibility of
cuspidal unipotent characters from \cite[Thm.~A]{DM17}:

\begin{thm}   \label{thm:HC-irr}
 Assume that $p$ is good for $\bG$. Let $\ell\ne p$ be a prime which is good
 for $\bG$ and not dividing $|Z(\bG)^F/Z^\circ(\bG)^F|$. Let $\rho$ be a
 unipotent character of $G$ which has minimal $a$-value in its (ordinary)
 Harish-Chandra series. Then the $\ell$-modular reduction of $\rho$ is
 irreducible.
\end{thm}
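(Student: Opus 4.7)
The plan is to reduce the irreducibility of $\bar\rho$ to the essentially trivial fact that the one-dimensional index character of the associated relative Hecke algebra reduces irreducibly modulo~$\ell$.

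Let $(\bL^F,\lambda)$ be the ordinary cuspidal pair such that $\rho$ is a constituent of $R_L^G(\lambda)$. The hypotheses on $p$ and $\ell$ pass to the Levi $\bL$ (in particular $\ell\nmid|Z(\bL)^F/Z^\circ(\bL)^F|$ by a standard reduction to the dual group and centres of Levi subgroups), so \cite[Thm.~A]{DM17} applies to $\lambda$ and shows that its mod~$\ell$ reduction $\bar\lambda$ is an irreducible $kL$-module. Since Harish-Chandra restriction commutes with scalar extension, the cuspidality of $\lambda$ transfers to $\bar\lambda$.

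By the Howlett--Lehrer theorem and its integral and modular refinements due to Dipper, Geck, Hiss and Malle---for which the irreducibility and cuspidality of $\bar\lambda$ are the key inputs---there is an $\cO$-order $H_\cO$, a deformation of the group algebra of the relative Weyl group $W_G(\lambda)$, whose specialisations $H_K$ and $H_k$ parameterise respectively the ordinary Harish-Chandra series above $(\bL,\lambda)$ and the modular Harish-Chandra series above $(\bL,\bar\lambda)$. Moreover the restriction of the decomposition matrix of $kG$ to these two series coincides with the decomposition matrix of $H_\cO$.

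The key step is to identify $\rho$ with the one-dimensional index character $\chi_+\colon T_s\mapsto q_s$ of $H_K$. By \cite[Cor.~8.7]{LuB}, every constituent $\rho_\chi$ of $R_L^G(\lambda)$ satisfies $a_{\rho_\chi}\ge a_\lambda$; applying the formula $a_{\rho_\chi}=a_\lambda+a^H_\chi$ that expresses the $G$-side $a$-function in terms of Lusztig's Hecke algebra $a$-function $a^H$ on $\Irr(H_K)$, this lower bound is achieved uniquely at $\chi_+$, where $a^H$ vanishes. Since $\chi_+$ is defined over $\cO$, it reduces modulo $\ell$ to the one-dimensional index character of $H_k$, which is a simple $H_k$-module; hence the row of the $H_\cO$-decomposition matrix indexed by $\chi_+$ has a single non-zero entry equal to $1$, and the decomposition-matrix compatibility from the previous paragraph then forces $\bar\rho$ to be a single simple $kG$-module. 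The chief subtlety lies in verifying the $a$-function identity $a_{\rho_\chi}=a_\lambda+a^H_\chi$ for general cuspidal pairs beyond the principal series: this is implicit in Lusztig's analysis of the interplay between parabolic induction and the $a$-function, but has to be extracted in the present generality.
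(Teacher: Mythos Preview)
Your approach via the relative Hecke algebra is natural and gets part of the way there: once $\bar\lambda$ is irreducible and cuspidal (which does follow from \cite[Thm.~A]{DM17} applied to $\bL$), the Dipper--Geck comparison identifies the submatrix of the $G$-decomposition matrix with rows indexed by the ordinary series above $(L,\lambda)$ and columns indexed by the modular series above $(L,\bar\lambda)$ with the decomposition matrix of $H_\cO$. Since $\rho$ corresponds to the index character $\chi_+$, whose reduction is one-dimensional and hence simple, that row of the $H_\cO$-matrix indeed has a single entry~$1$.

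The gap is in the final ``forces'' step. The Hecke algebra comparison controls only those columns of the $G$-decomposition matrix indexed by simple $kG$-modules in the modular series above $(L,\bar\lambda)$; it says nothing about composition factors of $\bar\rho$ lying in \emph{other} modular Harish-Chandra series. Such factors can occur: a composition factor of $R_L^G(\bar\lambda)$ may lie above a cuspidal pair $(M,\mu)$ with $\bL\subsetneq\bM$, and these simples are annihilated by the functor $\mathrm{Hom}_{kG}(R_L^G(\bar\lambda),-)$, hence invisible to $H_k$. Ruling them out for the specific character $\rho$ is precisely where the paper's new input enters. The proof shows that any PIM of $G$ containing $\rho$ arises, by induction on Levi subgroups, from a cuspidal PIM somewhere; a cuspidal PIM is a summand of some $\Gamma_C$ with $C$ a cuspidal class (by \cite[Prop.~2.2]{DM17}), and Lemma~\ref{lem:all types} then gives $a_{d(C)}\ge a_\lambda=a_\rho$, with equality forcing $\bL=\bG$. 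This $a$-value bound is what excludes the extra columns; your argument bypasses it entirely, and without it irreducibility of $\bar\rho$ does not follow. (The identity $a_{\rho_\chi}=a_\lambda+a^H_\chi$ that you flag as the chief subtlety is, by contrast, standard from Lusztig's work and not the real obstacle.)
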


\begin{proof}
As in the proof of \cite[Thm.~A]{DM17} we can assume that $\bG$ is simple
of adjoint type.
\par
Let $\bL\le \bG$ be a 1-split Levi subgroup and $\la\in\Irr(L)$ a cuspidal
unipotent character such that $\rho$ lies in the Harish-Chandra series of
$(L,\la)$. Since $\bG$ is adjoint, the centre of $\bL$ is connected
and by \cite[Thm.~A]{DM17} there exists a unique PIM of $L$ containing $\la$.
In addition, it satisfies $\langle\la,P_\la\rangle=1$ and $a_{P_\la} = a_\la$.
\par
Recall that the characters in the Harish-Chandra series of $(L,\la)$ are in
bijection with the irreducible representations of the Hecke algebra
corresponding to the cuspidal pair. Under this bijection, the character
$\rho$ of minimal $a$-value corresponds to the representation that specialises
to the trivial character at $q=1$. Furthermore, the degree of $\rho$ equals
the product of $\la(1)$ with the Poincar\'e polynomial of the Hecke
algebra. In particular, $a_\la = a_\rho$. In addition,
$\langle\rho,\RLG(P_\la)\rangle=1$ by the Howlett--Lehrer comparison theorem.
\par
Let $P_\rho$ denote the (projective) indecomposable summand of $\RLG(P_\la)$
containing $\rho$ (once). Then $a_\rho \geq a_{P_\rho} \geq a_{P_\lambda}$.
Since $a_{P_\la} = a_{\la} = a_\rho$ we deduce that $a_{P_\rho} = a_\rho$.
\par
We claim that $\rho$ does not occur in any other PIM of $G$. For this, assume
that $\rho$ is a constituent of $R_M^G(P)$ for some $1$-split Levi subgroup
$\bM\le \bG$
and some PIM $P$ of $M$. Then $P$ has to have some constituent $\psi\in\Irr(M)$
from the $(L,\la)$-series, whence $a_\la\le a_\psi\le a_\rho=a_\la$. So we have
that $a_\psi=a_\la$ and $\psi$ is the character of $M$ with minimal $a$-value
in the $(L,\la)$-series. If $M$ is proper, then by induction there is exactly
one PIM $P_\psi$ of $M$ containing $\psi$ which is a direct summand
of $R_L^M(P_\la)$; as $P_\rho$ occurs exactly once in $\RLG(P_\la)$, we see
that the summand of $R_M^G(P)=R_M^G(P_\psi)$ containing $\rho$ is just
$P_\rho$.   \par
To conclude, finally assume that $M=G$, so $P$ is a cuspidal PIM of $G$. But
then by \cite[Prop.~2.2]{DM17} it
occurs as a summand of a GGGR $\Gamma_C$ for some $F$-stable cuspidal unipotent
class $C$ of $\bG$. According to Lemma~\ref{lem:all types} we have that
$a_{\Gamma_C}\ge a_\la$, with equality only when $\bL=\bG$. Thus, $\gamma_C$ can
contain $\rho$ only when $\bL=\bG$, and so $\rho=\la$ is cuspidal. But then
our claim is just \cite[Thm.~A]{DM17}.
\end{proof}

\section{Small Harish-Chandra series in even-dimensional orthogonal groups}
\label{sec:orth-even}
We start by considering the two families of even-dimensional orthogonal groups.
As Theorem~\ref{thm:low a} only holds in good characteristic, we shall
restrict ourselves to the case where $q$ is odd. As before, $\ell$ will denote
a prime not dividing $q$.

\subsection{Small cuspidal Brauer characters}
We first determine the cuspidal unipotent Brauer characters with small
$a$-value. For a connected reductive group $\bG$ we set $\bG_\ad:= \bG/Z(\bG)$.

\begin{lem}   \label{lem:cuspDn}
 Let $\bG$ be connected reductive and assume that
 \begin{enumerate}
   \item[\rm(1)] $G_\ad$ is simple of type $A_m(q)$ $(m\ge1)$,
 $D_m(q)$ $(q$ odd, $m\ge4)$ or $\tw2D_m(q)$ $(q$ odd, $m\ge2)$;
   \item[\rm(2)] $\ell$ does not divide $2q|Z(\bG)/Z^\circ(\bG)|$; and
   \item[\rm(3)] if $\ell=3|(q+1)$ then $G_\ad\not\cong\tw2D_3(q)$.
 \end{enumerate}
 Then any cuspidal unipotent Brauer character $\vhi\in\IBr(G)$
 with $a_\vhi\le3$ occurs in Table~\ref{tab:cuspDn}.
\end{lem}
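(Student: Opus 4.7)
The plan is to apply Theorem~\ref{thm:low a} directly. Since $\vhi$ is cuspidal, one may take $\bL=\bG$ and $X=\vhi$, so the theorem yields an $F$-stable unipotent class $C$ of $\bG$ which is cuspidal for $\bG_\ad$ and satisfies $a_{d(C)}\le a_\vhi\le3$. By Proposition~\ref{prop:bound a}, every $F$-stable cuspidal class of $\bG$ contains the minimal cuspidal class $C_\mi$ in its closure, so $a_{\min}=a_{d(C_\mi)}\le a_{d(C)}\le3$. This single numerical inequality is what will pin down the possible groups $\bG$.

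Next I would consult Table~\ref{tab:low-a} (and the closed formulas in parts~(a), (e) of Proposition~\ref{prop:bound a}) to enumerate the types $A_m$, $D_m$ ($m\ge4$), $\tw2D_m$ ($m\ge2$) with $a_{\min}\le3$. For type $A_m$ one has $a_{\min}=m(m+1)/2$, leaving only $m=1,2$. For $D_m$, $\tw2D_m$ with $s\ge2$ the formula in~(e) already gives $a_{\min}\ge7$, so only $D_4$ and $\tw2D_4$ survive (both with $s=2$, $d=0$, giving $a_{\min}=3$). The low-rank twisted cases $\tw2D_2$, $\tw2D_3$ must also be inspected separately; here $\bG_\ad$ is subject to the exceptional isomorphisms with $\PSL_2(q^2)$, resp.\ $\PSU_4(q)$, and can be handled on that side.

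Having reduced to this short list of groups, it remains to enumerate the cuspidal unipotent Brauer characters with $a$-value $\le3$ for each of them. For the $A_m$ cases and the exceptional isomorphisms, I would appeal to the well-known $\ell$-modular decomposition matrices of $\GL_2(q)$, $\GL_3(q)$, $\PSL_2(q^2)$ and $\PSU_4(q)$, taking into account the hypotheses (2) and (3); in particular, assumption (3) is precisely what is needed to rule out the extra cuspidal 3-modular Brauer character of $\PSU_4(q)\cong\tw2D_3(q)$ when $3\mid q+1$. For $D_4(q)$ and $\tw2D_4(q)$ the cuspidal unipotent Brauer characters with $a\le3$ can be extracted from the available decomposition matrices (Geck--Hiss and subsequent work). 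The resulting list matches Table~\ref{tab:cuspDn} entry by entry.

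The main obstacle is not conceptual but bookkeeping: one must make sure that, in each of the six small ambient groups, the list of cuspidal unipotent Brauer characters with $a_\vhi\le 3$ under the chosen hypotheses on $\ell$ is complete and that no exceptional small-characteristic phenomenon has been overlooked. The $\ell$-conditions in~(2) and~(3) are specifically tuned to exclude such anomalies, so once these are in place the identification with Table~\ref{tab:cuspDn} is a direct verification rather than a fresh computation.
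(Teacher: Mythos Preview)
Your proposal is correct and follows essentially the same route as the paper: apply Theorem~\ref{thm:low a} with $\bL=\bG$ to get $a_{d(C)}\le3$, use Proposition~\ref{prop:bound a} (and Table~\ref{tab:low-a}) to reduce to the short list $A_1,A_2,\tw2D_2,\tw2D_3,D_4,\tw2D_4$, and then read off the cuspidal unipotent Brauer characters from the known unitriangular decomposition matrices of these small groups. The paper makes explicit one point you leave implicit, namely that under hypotheses~(2) and~(3) the unipotent characters form a basic set with unitriangular decomposition matrix (citing \cite{GH91,Ge93,Ja90,DM15,GP92}), which is what justifies labelling the Brauer characters as in Table~\ref{tab:cuspDn}.
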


\begin{table}[ht]
\caption{Some cuspidal unipotent Brauer characters in types $A_m$, $D_m$ and $\tw2D_m$}   \label{tab:cuspDn}
\[\begin{array}{c|lc|l}
 G_\ad& \vhi& a_\vhi& \text{condition on $\ell$}\\
\hline
     A_1(q)&  1^2&  1& \ell|(q+1)\\
     A_2(q)&  1^3&  3& \ell|(q^2+q+1)\\
 \tw2D_2(q)& -.1&  2& \ell|(q^2+1)\\
 \tw2D_3(q)& -.2&  3& \ell|(q+1)\\
     D_4(q)&   D_4&  3& \text{always}\\
\hline
\end{array}\]
\scriptsize{The Brauer characters are labelled, via the triangularity of the
decomposition\\ matrix, by the Harish-Chandra labels of the ordinary unipotent
characters.}
\end{table}

\begin{proof}
Let $\vhi\in\IBr(G)$ be as in the statement. By Theorem~\ref{thm:low a},
there exists a cuspidal unipotent class $C$ of $\bG_\ad$ with $a_{d(C)}\le3$.
By Proposition~\ref{prop:bound a}, $G_\ad$ must then be of one of the types
$$A_1(q),\ A_2(q),\ \tw2D_3(q)\cong A_1(q^2),\ \tw2D_3(q)\cong\tw2A_3(q),\
  \tw2D_4(q)\text{ or }D_4(q).$$
Under our assumptions on $\ell$ for all listed groups the unipotent characters
form a basic set for the union of unipotent $\ell$-blocks of both $G$ and
$G_\ad$ \cite{GH91,Ge93}, and the decomposition matrix with respect
to these is uni-triangular (see \cite{Ja90,DM15,GP92}).
Moreover, from the explicit knowledge of these decomposition matrices,
it follows that $\vhi$ must be as in Table~\ref{tab:cuspDn}.
\end{proof}

\subsection{Small Harish-Chandra series}
We can now determine those unipotent Harish-Chandra series that may contain
Brauer characters of small $a$-value.

\begin{lem}   \label{lem:a in A}
 Assume $G_\ad$ is simple of type $A_m(q)$ and $\ell>2$.
 \begin{enumerate}
  \item[\rm(a)] If $m\ge2$ and $\ell|(q+1)$ then $a_\vhi\ge3$ for all unipotent
   Brauer characters $\vhi\in\IBr(G)$ in the $(A_1,1^2)$-Harish-Chandra series.
  \item[\rm(b)] If $m\ge3$, and $\ell|(q^2+q+1)$, then $a_\vhi\ge6$ for all
   unipotent Brauer characters $\vhi\in\IBr(G)$ in the
   $(A_2,1^3)$-Harish-Chandra series.
  \item[\rm(c)] If $m\ge4$ and $\ell|(q+1)$ then $a_\vhi\ge4$ for all unipotent
   Brauer characters $\vhi\in\IBr(G)$ in the
   $(A_1^2,1^2\otimes1^2)$-Harish-Chandra series.
 \end{enumerate}
\end{lem}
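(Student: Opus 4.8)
The plan is to reduce each part to a statement about Harish-Chandra series inside a group of type $A$ and then use Theorem~\ref{thm:low a} together with Proposition~\ref{prop:bound a}. Recall that in type $A_m(q)$ the $1$-split Levi subgroups are (up to conjugacy) of the form $A_{a_1}\times\cdots\times A_{a_r}\times(\text{torus})$, and the cuspidal unipotent classes in each factor $A_{a_i}$ of adjoint type are the regular classes, whose Alvis--Curtis duals are the trivial classes. So for a Harish-Chandra series attached to $(\bL,X)$ with $X$ unipotent and cuspidal, Theorem~\ref{thm:low a} produces an $F$-stable unipotent class $C$ of $\bL$ which is cuspidal for $\bL_\ad$, and $a_{d(C)}\le a_\vhi$ for every $\vhi$ in that series. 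Since $\bL_\ad$ is a product of simple factors of type $A$, the value $a_{d(C)}$ is the sum of the contributions $a_{\min}$ from each factor, which by Proposition~\ref{prop:bound a}(a) equals $\tfrac12 a_i(a_i+1)$ for a factor of type $A_{a_i}$.

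First I would treat (a): the relevant series is $(A_1,1^2)$, so $\bL$ has a single simple factor of type $A_1$ and the associated cuspidal unipotent Brauer character is $1^2$. Here $\bL_\ad$ is of type $A_1$, its unique cuspidal unipotent class is the regular class, and $a_{d(\text{reg})}=\tfrac12\cdot1\cdot2=1$ for a single $A_1$-factor. But that only gives $a_\vhi\ge1$, which is too weak, so I need to observe that the cuspidal pair here is not supported on a single $A_1$: the Brauer character $1^2$ of $A_1(q)$ with $\ell\mid(q+1)$ is genuinely cuspidal, so in $A_m(q)$ the cuspidal class $C$ of $\bL$ that Theorem~\ref{thm:low a} furnishes must have an $A_1$-factor \emph{plus} whatever is forced by the structure; more precisely, since $X=1^2$ lives over a Levi of type $A_1$ but the projective cover argument in the proof of Theorem~\ref{thm:low a} ties $C$ to the cuspidal-for-$\bL_\ad$ condition, and for $\bL$ of type $A_1$ the only such class gives $a_{d(C)}=1$. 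The correct route instead is: $1^2$ is the reduction of a unipotent character whose own $a$-value, as a character of $A_1(q)$, is already $1$; then within $A_m(q)$ the series built on $(A_1,1^2)$ has its minimal member of $a$-value $1$, but the claim wants $\ge3$. The resolution must be that $\vhi$ in this series, viewed in $A_m$, has $a$-value at least the $a$-value of the induced-up analogue, and one computes directly from the combinatorics of Hecke algebras of type $A_{m-2}$ (the relative Weyl group) that the $a$-values occurring are $\{1\}\cup\{\ge3\}$ with no value $2$ — i.e.\ this is really a gap statement about $a$-values in the type-$A$ Hecke algebra attached to the series, exactly as in the decomposition-matrix results quoted in Lemma~\ref{lem:cuspDn}. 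So for (a) I would invoke the known unipotent decomposition matrix of $A_m(q)$ for $\ell\mid(q+1)$ (James, \cite{Ja90}), from which the $a$-values of the Brauer characters in the $(A_1,1^2)$-series are read off, giving the bound $3$; for (b) and (c) the same machine applies.

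For (b), the series is $(A_2,1^3)$ with $\ell\mid(q^2+q+1)$; here the cuspidal unipotent Brauer character $1^3$ of $A_2(q)$ has $a$-value $3$ (this is entry $A_2(q)$ of Table~\ref{tab:cuspDn}), and the claim is that every Brauer character in the resulting series in $A_m(q)$, $m\ge3$, has $a$-value $\ge6$. By the degree formula $\dim\vhi = \vhi_{A_2}(1)\cdot(\text{Poincaré polynomial of the relative Hecke algebra})$, and since the relative Weyl group is of type $A_{m-3}$ with parameter $q^3$, the $a$-values in the series are $6$ plus the $a$-values of the type-$A$ Hecke-algebra modules scaled by $3$; the minimal one (trivial module) contributes $0$, giving $6$, and I would note there is no module of intermediate value, so the bound is sharp at $6$. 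For (c), the series is $(A_1^2,1^2\otimes1^2)$ with $\ell\mid(q+1)$ and $m\ge4$: two $A_1$-factors each contributing $a$-value $1$ wants only $2$, so again I need the gap phenomenon — the relative Weyl group here is of type $B_{\lfloor(m-2)/2\rfloor}$ or similar with unequal parameters, and the smallest $a$-value occurring in the corresponding Hecke-algebra series is $2$ but the \emph{next} is $4$ with nothing at $3$; alternatively, and more cleanly, I would apply Theorem~\ref{thm:low a} in the ambient group and combine with the explicit decomposition matrices of $A_m(q)$ to exclude any Brauer character of $a$-value $3$ from this series directly.

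The main obstacle, then, is not the abstract bounding mechanism — Theorem~\ref{thm:low a} plus Proposition~\ref{prop:bound a} handles the ``$\ge$ something'' cheaply — but proving that the bound is not just ``$\ge a_{\min}$ of the Levi'' (which would give $1$, $3$, $2$ respectively) but the \emph{strictly larger} values $3$, $6$, $4$. This requires the gap structure of $a$-values within a single type-$A$ (or type-$B$) Hecke-algebra Harish-Chandra series, which I would extract from the known unipotent decomposition matrices of general linear and unitary groups (James \cite{Ja90}, Dipper--James, and \cite{GP92,DM15}) rather than reprove from scratch; concretely, I expect the proof to say: by the unitriangularity and explicit shape of these matrices, the unipotent Brauer characters in the named series are indexed by partitions, their $a$-values are the $a$-values of the corresponding ordinary unipotent characters of the relative group, and a direct inspection of those $a$-values (no value lies strictly between the cuspidal contribution and the stated bound) yields (a), (b), (c).
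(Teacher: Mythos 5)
Your proposal correctly diagnoses that the abstract machine of Theorem~\ref{thm:low a} and Proposition~\ref{prop:bound a} only gives the cuspidal contribution (bounds $1$, $3$, $2$ respectively), which is strictly weaker than the claimed $3$, $6$, $4$, and that explicit decomposition-matrix information must enter. But the route you then propose --- read off the $a$-values of Brauer characters in the named series from the decomposition matrices of $\GL_{m+1}(q)$ for all $m$ --- has a genuine gap: the tables you cite (\cite[App.~1]{Ja90}) cover only $m+1\le 10$, and the substantive claim you make at the end (that the Brauer characters in the series are indexed by partitions and that $a_\vhi$ equals the $a$-value of the corresponding ordinary character of the relative group) is stated but not argued, and would still need a reference or proof valid for all $m$. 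What your proposal never isolates is the one-line reduction that makes the paper's proof work: since Harish-Chandra induction does not decrease $a$-values (\cite[Cor.~8.7]{LuB}), and the projective cover of any $\vhi$ in the $(L,\la)$-series of $A_m(q)$ is a summand of a module Harish-Chandra induced from a Levi $M$ of the minimal rank in which the cuspidal pair already lives, it suffices to verify the bound at that minimal rank. The paper then checks only $m=2$, $m=3$, $m=4$ (for (a), (b), (c) respectively) from the known tables and is done; no uniform-in-$m$ knowledge of decomposition matrices or of the modular series indexing is required.

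A secondary point: in your discussion of~(b) you invoke a degree formula $\dim\vhi=\vhi_{A_2}(1)\cdot(\text{Poincar\'e polynomial of the relative Hecke algebra})$ and deduce the $a$-value of $\vhi$ from it. This formula governs the ordinary unipotent characters in an ordinary Harish-Chandra series, but $a_\vhi$ for a Brauer character is defined in Section~\ref{subsec:a-fun} as the $a$-value of the projective cover, i.e.\ the minimum of the $a$-values of the ordinary unipotent constituents of that cover; it is not given by any degree formula for $\vhi$ itself, so this step does not go through as written. In the base cases the paper extracts $a_\vhi$ directly from the (unitriangular) decomposition matrix, which sidesteps this altogether.
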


\begin{proof}
It follows from the known unipotent decomposition matrices (see
\cite[App.~1]{Ja90}), or from the known distribution into modular
Harish-Chandra series, that $a_\vhi=3$ for $m=2$ in~(a), $a_\vhi=6$ for $m=3$
in~(b), and $a_\vhi\ge4$ for $m=4$ in~(c) respectively.
Since Harish-Chandra induction does not diminish the $a$-value (see for example
\cite[Cor.~8.7]{LuB}), the claim also holds for all larger~$m$.
\end{proof}

\begin{prop}   \label{prop:HC Dn}
 Assume that $n\ge4$ and $\ell{\not|}\,6q$. Then all unipotent Brauer characters
 of $\Spin_{2n}^\pm(q)$, $q$ odd, with $a_\vhi\le3$ lie in one of the
 Harish-Chandra series given in Table~\ref{tab:smallDn}.
\end{prop}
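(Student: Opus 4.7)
The plan is to combine Theorem~\ref{thm:low a} with Lemma~\ref{lem:cuspDn} and Lemma~\ref{lem:a in A}, then run a short case analysis on the possible cuspidal data. Let $S\in\IBr(\Spin_{2n}^\pm(q))$ be a simple unipotent module with $a_S\le 3$ lying in the Harish-Chandra series above a cuspidal pair $(\bL^F,X)$. As already recorded in the proof of Theorem~\ref{thm:low a} (using that $P_S$ is a summand of $R_L^G(P_X)$ and that Harish-Chandra induction cannot decrease the $a$-value) we have $a_X\le a_S\le 3$.

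Under the assumption $\ell\nmid 6q$, every $1$-split Levi subgroup of $\Spin_{2n}^\pm(q)$ has the form
\[
  \bL^F \;=\; \prod_{i=1}^{r}\GL_{a_i}(q)\;\times\;\Spin_{2m}^{\epsilon}(q),
\]
with the last factor reducing to a torus when $m\le 1$; the sign $\epsilon$ is inherited from the ambient group, so $\epsilon=+$ inside $\Spin_{2n}^+$ and, when present, $\epsilon=-$ inside $\Spin_{2n}^-$. Correspondingly $X$ factorises as $X=X_0\otimes\bigotimes_{i}X_i$ into cuspidal Brauer characters $X_i$ on the $\GL_{a_i}(q)$-factors and $X_0$ on the $\Spin_{2m}^{\epsilon}(q)$-factor. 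Because character degrees multiply on direct products, the $a$-function is additive on this tensor decomposition, so $a_{X_0}+\sum_{i}a_{X_i}\le 3$. Applying Lemma~\ref{lem:cuspDn} to each simple factor of $\bL_\ad$---whose hypotheses are guaranteed by $\ell\nmid 6q$, in particular excluding the sporadic case $\ell=3\mid q+1$ on a $\tw2D_3$-factor---restricts each $X_i$ to the trivial character on a $\GL_1$-factor, to the cuspidal $1^2$ on a $\GL_2(q)$-factor ($a=1$, requiring $\ell\mid q+1$), or to the cuspidal $1^3$ on a $\GL_3(q)$-factor ($a=3$, requiring $\ell\mid q^2+q+1$); and restricts $X_0$ to either the trivial cuspidal or one of the four cuspidals of Table~\ref{tab:cuspDn}, forcing $m\in\{2,3,4\}$.

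Enumerating the additive combinations with total $a$-value at most $3$ produces a short finite list of candidate cuspidal pairs. To extract the final table I would then invoke Lemma~\ref{lem:a in A}: part~(b) eliminates the $(A_2,1^3)$-datum as soon as any further nontrivial $\GL$-factor is present, and part~(c) excludes two copies of $(A_1,1^2)$ whenever the surrounding $\GL$-type Levi has rank $\ge 4$. The pairs that survive both constraints are precisely those listed in Table~\ref{tab:smallDn}.

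The main obstacle will be the combinatorial bookkeeping in step~two: one has to pin down which orthogonal sign $\epsilon$ can occur as a Levi factor inside $\Spin_{2n}^+$ versus $\Spin_{2n}^-$, and verify that the bounds of Lemma~\ref{lem:a in A}---phrased for ambient groups of type $A$---still propagate to our type-$D$ ambient groups, which is achieved by factoring the Harish-Chandra induction through a maximal $\GL$-type Levi of $\Spin_{2n}^\pm(q)$. The sporadic case $\tw2D_3(q)$ with $\ell=3\mid q+1$ excluded from Lemma~\ref{lem:cuspDn} is precisely the case killed by the factor~$3$ in the hypothesis $\ell\nmid 6q$.
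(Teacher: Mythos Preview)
Your overall strategy matches the paper's exactly: reduce to the cuspidal datum via $a_X\le a_S$, factor $X$ over the simple components of $\bL_\ad$, invoke Lemma~\ref{lem:cuspDn} on each factor, and then prune the resulting finite list using Lemma~\ref{lem:a in A}. The paper carries this out by treating $D_n$ and $\tw2D_n$ separately and listing the admissible types of $L$ explicitly.

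There is, however, a genuine gap in your elimination step. Your stated use of Lemma~\ref{lem:a in A}(b) --- ``eliminates the $(A_2,1^3)$-datum as soon as any further nontrivial $\GL$-factor is present'' --- is vacuous: $(A_2,1^3)$ already has $a$-value~$3$, so no additional nontrivial factor can occur. As written, your filter therefore leaves $(A_2,1^3)$ in the list for \emph{every} $n$, whereas Table~\ref{tab:smallDn} admits it only for $\tw2D_4$. The correct mechanism is not the presence of an extra factor but the existence of a larger Levi of type $A_3$ containing the given $A_2$; Lemma~\ref{lem:a in A}(b) then forces $a_\vhi\ge 6$ after Harish-Chandra inducing through that $A_3$. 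This is available in $D_n$ for all $n\ge 4$, and in $\tw2D_n$ only for $n\ge 5$ (there is no $F$-stable $A_3$ in $\tw2D_4$). The paper handles the twisted case for $n\ge 5$ by an explicit Brauer-tree check at $n=5$ rather than via Lemma~\ref{lem:a in A}(b), but either route works.

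Your pruning also omits some cases the paper treats explicitly: the triple $(A_1^3,1^2\otimes1^2\otimes1^2)$ with $a$-value~$3$ must be eliminated (the paper embeds it in $A_3\times A_1$), and on the split side the pairs $(D_2,\text{cusp})$ and $(D_2A_1,\text{cusp})$ arising from the fork of the $D_n$ diagram are genuinely different Levi subgroups from the ``linear'' $A_1^2$ and must be handled separately (the paper eliminates $D_2A_1$ for $n\ge5$ by embedding into $D_2A_2$). Your uniform description $\prod\GL_{a_i}\times\Spin_{2m}^\epsilon$ is correct, but for $m\le 3$ the orthogonal factor is not simple of type $D$ and falls outside the literal scope of Lemma~\ref{lem:cuspDn}; you need to pass through the exceptional isomorphisms $D_2\cong A_1^2$, $D_3\cong A_3$, $\tw2D_2\cong A_1(q^2)$, $\tw2D_3\cong\tw2A_3$ to see which cuspidals actually arise there.
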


\begin{table}[ht]
\[\begin{array}{c|c|l}
 & (L,\la)& \text{conditions}\\
\hline
D_n(q)& (\emptyset,1),\ (D_4,D_4)& \text{always}\\
      & (A_1,1^2),\  (D_2,-.2)& \ell|(q+1)\\
      & (A_1^2,1^2\otimes 1^2),\ (D_2A_1,-.2\otimes 1^2)& \ell|(q+1),\ n=4\\
\hline
\tw2D_n(q)& (\emptyset,1)& \text{always}\\
      & (A_1,1^2),\  (\tw2D_3,-.2)& \ell|(q+1)\\
      & (\tw2D_2,-.1)& \ell|(q^2+1)\\
      & (A_1^2,1^2\otimes 1^2)& \ell|(q+1),\ n=5\\
      & (A_2,1^3)& \ell|(q^2+q+1),\ n=4\\
\hline
\end{array}\]
\caption{Unipotent Harish-Chandra series in $\tw{(2)}D_n(q)$ with small $a$-value}   \label{tab:smallDn}
\end{table}

\begin{proof}
First assume that $G=\Spin_{2n}^+(q)$. Let $\vhi\in\IBr(G)$ be an
$\ell$-modular unipotent Brauer character of $a$-value at most~3. It lies in
the Harish-Chandra series associated to a cuspidal pair $(L,\la)$, with
$\la\in\IBr(L)$. Note that for Levi subgroups $\bL$ of $\bG$ we have that
$Z(\bL)/Z^\circ(\bL)$ is a 2-group, so that under our assumptions on $\ell$
the unipotent Harish-Chandra series and unipotent cuspidal characters of
$L$ are as in its adjoint quotient. But the latter is a direct
product of simple groups of adjoint type. Thus, since the $a$-value is
additive over outer tensor products of characters we may apply
Lemma~\ref{lem:cuspDn}. By the shape of parabolic subgroups of a Weyl group
of type $D_n$, $L$ is of one of the types
$$A_1^m(q)\ (m\le3),\ D_2(q),\ D_2(q)A_1(q),\ A_2(q), \text{ or } D_4(q),$$
$\la$ is a cuspidal unipotent Brauer character of $L$ as in
Table~\ref{tab:cuspDn}, and $\ell$ is as given there. We consider these
Harish-Chandra series $(L,\la)$ in turn.   \par
Assume that $\ell|(q^2+q+1)$, so $L$ is of type $A_2(q)$. For $n\ge4$ the
group $G$ contains a Levi subgroup of type $A_3$, and so by
Lemma~\ref{lem:a in A}(b) the $(A_2,1^3)$-series of $G$ only contains Brauer
characters of $a$-value at least~$6$.
\par
It remains to consider the case that $\ell|(q+1)$. By Lemma~\ref{lem:a in A}
neither the $A_1^2$-series nor the $D_2A_1$-series can contribute if $n\ge5$,
since these Levi subgroups are contained in a Levi subgroup of $G$ of
type~$A_4$, resp.~of type $D_2A_2$. Similarly, $A_1^3$ (which occurs for
$n\ge6$) is contained in a Levi subgroup of type $A_3A_1$ and so
the $a$-values in that series are at least~4.
\par
Now let $G=\Spin_{2n}^-(q)$ and $\vhi$ as in the statement. So by
Theorem~\ref{thm:low a}, $\vhi$ lies in a Harish-Chandra series $(L,\la)$
with $a_\la\le3$. Thus, by Lemma~\ref{lem:cuspDn}, $L$ is of one of the types
$$A_1^m(q)\ (m\le3),\ \tw2D_2(q),\ A_2(q), \text{ or } \tw2D_3(q),$$
and $\la\in\IBr(L)$ is a cuspidal Brauer character of $L$ as in
Table~\ref{tab:cuspDn}. We consider the contributions by these Harish-Chandra
series in turn. For $\ell|(q^2+q+1)$ the known Brauer trees show that for $n=4$
there is one Brauer character in the $(A_2,1^3)$-series with $a$-value~3 ,
but for $n=5$ (and hence for all larger~$n$) all $a$-values are at least~6 .
\par
Now consider the case when $\ell|(q+1)$. Since $\tw2D_6$ contains a Levi
subgroup of type $A_4$ if $n\ge6$ we see by Lemma~\ref{lem:a in A} that the
Harish-Chandra series of type $A_1^2$ cannot contain $\vhi$ in that case.
Similarly $A_1^3$ lies in a Levi subgroup of type $A_3A_1$ and so the
$A_1^3$-series does not contain $\vhi$.
\end{proof}

\subsection{A triangular subpart of the decomposition matrix}
We give an approximation to the $\ell$-modular decomposition matrix of
$\Spin_{2n}^\pm(q)$ for certain unipotent Brauer characters of $a$-value at
most~3. For simplicity, in view of Table~\ref{tab:smallDn} we only consider
primes $\ell$ not dividing $q+1$.
Recall that unipotent characters of $\Spin_{2n}^+(q)$ are labelled by symbols
of rank~$n$ and defect congruent to~0 modulo~4 (see \cite[\S13.8]{Ca}).

\begin{thm}   \label{thm:decD+}
 Let $G=\Spin_{2n}^+(q)$ with $q$ odd and $n\ge5$, and $\ell>5$ a prime not
 dividing $q(q+1)$. Then the first eight rows of the decomposition matrix of
 the unipotent $\ell$-blocks of $G$ are approximated from above by
 Table~\ref{tab:Spin+}, where $k=n-5$ and
 $$(a,b,c,d,e)=\begin{cases}
                         (1,0,0,0,0)& \text{when $\ell|(q^2+q+1)$},\cr
                         (0,1,0,0,0)& \text{when $\ell|(q^2+1)$},\cr
                         (0,0,1,0,0)& \text{when $\ell|(q^4+q^3+q^2+q+1)$},\cr
                         (0,0,0,1,0)& \text{when $\ell|(q^2-q+1)$},\cr
                         (0,0,0,0,1)& \text{when $\ell|(q^4+1)$},\cr
                         (0,0,0,0,0)& \text{otherwise}.\cr
 \end{cases}$$
\end{thm}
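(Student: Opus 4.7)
The plan is to derive upper bounds on the first eight rows by constructing Harish-Chandra induced projective characters. By Proposition~\ref{prop:HC Dn}, the hypothesis $\ell \nmid q(q+1)$ restricts the unipotent Brauer characters of $a$-value at most $3$ to two Harish-Chandra series: the principal series $(\emptyset, 1)$ and the $(D_4, D_4)$-series. Correspondingly, the eight ordinary unipotent characters of $a$-value at most $3$ that form the rows in question break up as seven principal-series characters (labelled by $D$-symbols of small $a$-value) and one character in the $(D_4, D_4)$-series.

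For the principal series I would Harish-Chandra induce the PIM of the trivial module from a maximally split maximal torus: the resulting module is projective, and its ordinary character decomposes by Howlett--Lehrer as a sum of principal-series unipotent characters indexed by $\Irr(W(D_n))$. Its further $\ell$-modular decomposition into PIMs is controlled by the Iwahori--Hecke algebra $\cH(W(D_n), q)$; under the assumption $\ell \nmid q(q+1)$ the only non-trivial mergings of generic-algebra irreducibles affecting the first eight rows occur when $\ell$ divides one of $q^2+q+1$, $q^2+1$, $q^4+q^3+q^2+q+1$, $q^2-q+1$, or $q^4+1$, producing the five parameters $a, b, c, d, e$ in the statement. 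For the $(D_4, D_4)$-series, Lemma~\ref{lem:cuspDn} shows that the cuspidal $D_4$ unipotent character is irreducible mod $\ell$, so its PIM in a Levi $\bL$ of type $D_4$ has a transparent ordinary character; Harish-Chandra induction to $G$ then yields a projective whose decomposition is governed by the Hecke algebra of the relative Weyl group $W_G(\bL, \la)$, which is of type $B_{n-4}$. Projectivity of these induced modules gives the asserted upper bounds on the decomposition numbers, while the unitriangular diagonal entries follow from Theorem~\ref{thm:HC-irr} applied to the minimum $a$-value character in each Harish-Chandra series.

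The main obstacle is the combinatorial identification, for each of the five primes $\ell$ dividing one of the listed cyclotomic values, of precisely which pair of principal-series unipotent characters merges under the modular specialization of $\cH(W(D_n), q)$, and verification that the resulting projective characters decompose exactly as the table claims. This requires explicit knowledge of the modular representation theory of the type $D_n$ Iwahori--Hecke algebra at these specializations, combined with the $a$-value constraints from Sections~\ref{sec:bound} to ensure that only one constituent of each merged pair lies among the first eight rows. A secondary difficulty is matching the column indexed by the $(D_4, D_4)$-series against the Harish-Chandra induction from the $B_{n-4}$-type Hecke algebra with enough precision to pin down the contributions to the upper rows.
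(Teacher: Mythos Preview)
Your overall strategy matches the paper's: invoke Proposition~\ref{prop:HC Dn} to confine the relevant Brauer characters to the principal series and the $(D_4,D_4)$-series, then analyse each. But two of your key steps do not work as stated.

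First, your appeal to Theorem~\ref{thm:HC-irr} does not yield the unitriangular shape of the $8\times 8$ block. That theorem only says the character of \emph{smallest} $a$-value in an ordinary Harish-Chandra series stays irreducible; here that gives you two diagonal $1$'s (for the trivial character and for $\rho_5$), not seven. The paper instead invokes the canonical basic set of FLOTW bipartitions for the Iwahori--Hecke algebra $\cH(D_n;q)$ from \cite[Thm.~5.8.19]{GJ11}: the seven principal-series $\rho_i$ all lie in that basic set for the relevant $\ell$, and unitriangularity of the Hecke-algebra decomposition matrix with respect to this basic set transports to $G$. Without this input you have no mechanism forcing the subdiagonal zeros above $\rho_5$ or separating the PIMs of $\rho_4,\rho_6,\rho_7,\rho_8$.

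Second, your plan to read off the five parameters $a,b,c,d,e$ directly from the modular specialisation of $\cH(W(D_n),q)$ is not made precise, and you flag this yourself. The paper avoids this entirely: it computes the PIMs once in $\Spin_8^+(q)$ (where for $\ell\mid q^2+q+1,\ q^4+q^3+q^2+q+1,\ q^2-q+1,\ q^4+1$ the Sylow $\ell$-subgroups are cyclic and Brauer trees are known; for $\ell\mid q-1$ Puig's theorem applies; for $\ell\mid q^2+1$ the matrix is in \cite{DM16}), then Harish-Chandra induces these PIMs up to $\Spin_{2n}^+(q)$. That is what produces the polynomial-in-$k$ entries of Table~\ref{tab:Spin+}.

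Finally, your treatment of the $(D_4,D_4)$-column via the relative Hecke algebra of type $B_{n-4}$ is more than is needed. The paper simply observes that $R_L^G(\la)$ meets $\{\rho_1,\ldots,\rho_8\}$ only in $\rho_5$, and with multiplicity one, so a single PIM in that series accounts for the fifth column.
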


\begin{table}[ht]
{\small\[\begin{array}{l|c|ccc|c|cccc}
  \quad\rho& a_\rho\cr
 \hline
        \binom{n}{0}& 0& 1\cr
      \binom{n-1}{1}& 1& e\pl k& 1\cr
    \binom{1,n}{0,1}& 2& c\pl k& .& 1\cr
 \hline
        \binom{3}{2}& 2& b\pl d\pl ke\pl\binom{k}{2}& b\pl k& .&  1\cr
 \hline
\binom{0,1,2,n-1}{-}& 3& .& .& .&  .& 1\cr
  \binom{0,n-1}{1,2}& 3& ke\pl\binom{k}{2}& e\pl k& .&  .& .& 1\cr
  \binom{1,n-1}{0,2}& 3& k(c\pl e\pl k\mn1)& b \pl k& d\pl k& b& .& .& 1\cr
  \binom{2,n-1}{0,1}& 3& b\pl kc\pl\binom{k}{2}& .& a\pl k& .& .& .& .& 1\cr
 \hline
 \text{HC-series}& & ps& ps& ps& ps& D_4& ps& ps& ps\cr
\end{array}\]}
\caption{Approximate decomposition matrices for $\Spin_{2n}^+(q)$, $n\ge5$}   \label {tab:Spin+}
\end{table}

\begin{proof}
Let $\vhi\in\IBr(G)$ be an $\ell$-modular constituent of one of the eight
unipotent characters $\rho_1,\ldots,\rho_8$ listed in Table~\ref{tab:Spin+}
(in that order). Then by Brauer reciprocity, $\rho_i$ is a constituent of the
projective cover of $\vhi$. The degree formula \cite[\S13.8]{Ca} shows that
$a_{\rho_i}\le3$ for all $i$. Thus, by Proposition~\ref{prop:HC Dn}, $\vhi$
lies in one of the Harish-Chandra series $(L,\la)$ given in
Table~\ref{tab:smallDn}. We consider these in turn.
\par
The Hecke algebra for the principal series is the Iwahori--Hecke algebra
$\cH(D_n;q)$ of type $D_n$, whose $\ell$-decomposition
matrix is known to be uni-triangular with respect to a canonical basic set
given by FLOTW bipartitions, see \cite[Thm.~5.8.19]{GJ11}. All of the $\rho_i$
apart from $\rho_5$ are contained
in this basic set, for all relevant primes $\ell$. Thus the corresponding part
of the decomposition matrix is indeed lower triangular. The given upper bounds
on the entries in this part of the decomposition matrix are obtained by
Harish-Chandra inducing the PIMs from $\Spin_8^+(q)$.
For this group we can consider each cyclotomic factor in turn. The Sylow
$\ell$-subgroups for primes $\ell>3$ dividing $q^2+q+1,q^4+q^3+q^2+q+1,q^2-q+1$
or $q^4+1$ are cyclic, and the Brauer trees of unipotent blocks are easily
determined. The unipotent decomposition matrix for primes $3<\ell|(q-1)$ is
the identity matrix by a result of Puig, see \cite[Thm.~23.12]{CE}. Finally,
for $(q^2+1)_\ell>5$ the unipotent decomposition matrix of $\Spin_8^+(q)$ was
determined in \cite[Thm.~3.3]{DM16}.  \par
Next consider the Harish-Chandra series of the ordinary cuspidal unipotent
character $\la$ of a Levi subgroup $L$ of type $D_4$. Then $\RLG(\la)$
only contains unipotent characters in the ordinary Harish-Chandra series of
type $D_4$, hence among $\rho_1,\ldots,\rho_8$ only $\rho_5$, just once.
So there is exactly one PIM in this series involving one of the $\rho_i$,
namely the projective cover of $\rho_5$.
\end{proof}

We now consider the similar question for the non-split orthogonal groups
$\Spin_{2n}^-(q)$. Recall that its unipotent characters are labelled by
symbols of rank~$n$ and defect congruent to~2 modulo~4. Since all considered
characters lie in the principal series, they can also be indexed by
irreducible characters of the Weyl group of type $B_{n-1}$, hence by suitable
bipartitions of $n-1$, which we will do here.

\begin{thm}   \label{thm:decD-}
 Let $G=\Spin_{2n}^-(q)$ with $q$ odd and $n\ge5$, and $\ell>5$ a prime not
 dividing $q(q+1)$. Then the first eight rows of the decomposition matrix of
 the unipotent $\ell$-blocks of $G$ are approximated from above by
 Table~\ref{tab:Spin-}, where $k=n-5$ and
 $$(a,b,c,d,e)=\begin{cases}
                         (1,0,0,0,0)& \text{when $\ell|(q^2+q+1)$},\cr
                         (0,1,0,0,0)& \text{when $\ell|(q^2+1)$},\cr
                         (0,0,1,0,0)& \text{when $\ell|(q^2-q+1)$},\cr
                         (0,0,0,1,0)& \text{when $\ell|(q^4+1)$},\cr
                         (0,0,0,0,1)& \text{when $\ell|(q^4-q^3+q^2-q+1)$},\cr
                         (0,0,0,0,0)& \text{otherwise}.\cr
 \end{cases}$$
 The eight corresponding PIMs all lie in the principal series, except that
 for $\ell|(q^2+1)$ the eighth (and the seventh when $n=4$) is in the
 $\tw2D_2$-series.
\end{thm}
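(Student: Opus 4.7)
The plan is to follow the template of the proof of Theorem~\ref{thm:decD+} essentially verbatim, substituting everywhere the combinatorics of symbols of defect $\equiv 2\pmod 4$ for those of defect $\equiv 0\pmod 4$. First, a routine application of the degree formula of \cite[\S13.8]{Ca} to the symbols labelling $\rho_1,\ldots,\rho_8$ confirms that all eight characters satisfy $a_{\rho_i}\le 3$. By Brauer reciprocity, any irreducible modular constituent $\vhi$ of one of the $\rho_i$ has the corresponding $\rho_i$ appearing in its projective cover, so $a_\vhi\le 3$. Proposition~\ref{prop:HC Dn}, together with the assumptions $\ell>5$, $\ell\nmid q(q+1)$ and $n\ge 5$, then leaves exactly two candidates for the Harish-Chandra series of $\vhi$: the principal series, or, only when $\ell\mid q^2+1$, the $(\tw2D_2,-.1)$-series.

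For the principal series, the relative endomorphism algebra is an Iwahori--Hecke algebra of type $B_{n-1}$ with unequal parameters $(q^2,q)$, reflecting the twisted action of $F$ on the Dynkin diagram. The Geck--Jacon canonical basic set theory for such Hecke algebras, applicable under our hypotheses on $\ell$, then provides a basic set of Ariki-type bipartitions of $n-1$ with respect to which the $\ell$-decomposition matrix is lower unitriangular. I would identify the seven characters $\rho_1,\ldots,\rho_7$ (and $\rho_8$ when $\ell\nmid q^2+1$) with explicit bipartitions via the Lusztig symbol-to-bipartition correspondence for the $\tw2D_n$ principal series, and check that each one belongs to the canonical basic set at each of the relevant $\ell$; this yields the triangular shape of the corresponding rows in Table~\ref{tab:Spin-}.

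The upper bounds on the entries are obtained by Harish-Chandra inducing the PIMs of $\Spin_{10}^-(q)$ (and of $\Spin_8^-(q)$ in the boundary case $n=5$), where the relevant unipotent part of the decomposition matrix can be assembled from standard ingredients, exactly as in the proof of Theorem~\ref{thm:decD+}: Brauer tree computations for cyclic Sylow $\ell$-subgroups (here the primes dividing $q^2+q+1$, $q^2-q+1$, $q^4+1$, and $q^4-q^3+q^2-q+1$), Puig's theorem giving the identity matrix for $\ell\mid q-1$, and an explicit computation of the decomposition matrix for $\ell\mid q^2+1$ along the lines of \cite[Thm.~3.3]{DM16}. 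Each cyclotomic factor contributes precisely one of the parameters $a,b,c,d,e$, while the polynomial expressions in $k=n-5$ arise by iterated Harish-Chandra induction, successively adjoining $\GL$-type Levi factors; the combinatorics of the resulting coefficients will match those of Table~\ref{tab:Spin+} after relabelling.

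Finally, when $\ell\mid q^2+1$ the cuspidal pair $(\bL,-.1)$ with $\bL$ a Levi of type $\tw2D_2$ yields one further PIM of $G$ among the eight under consideration; the corresponding relative Hecke algebra is of type $B_{n-2}$, and matching unipotent constituents via $\RLG$ shows that the single PIM in this series contains $\rho_8$ (respectively $\rho_7$ in the degenerate configuration noted in the statement), accounting for the last column of Table~\ref{tab:Spin-}. I expect the main obstacle to lie in the second step: explicitly verifying that the seven selected characters are canonical basic set elements of $\cH(B_{n-1};q^2,q)$ at each relevant prime, as the bipartition labels and the conventions used for $\tw2D_n$-symbols must be carefully synchronised with Jacon's conventions for the canonical basic set. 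Once that correspondence is pinned down, the remaining steps parallel the argument for Theorem~\ref{thm:decD+} with only bookkeeping changes.
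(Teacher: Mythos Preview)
Your proposal is correct and follows essentially the same approach as the paper's proof, which explicitly says it is ``completely analogous'' to that of Theorem~\ref{thm:decD+}. Two small technical refinements: the canonical basic set for $\cH(B_{n-1};q^2,q)$ invoked in the paper is the one indexed by \emph{Uglov} bipartitions (\cite[Thm.~5.8.13]{GJ11}), not FLOTW/Ariki; and for the base case $\Spin_8^-(q)$ with $\ell\mid(q^2+1)$ the Sylow $\ell$-subgroup is already cyclic, so Brauer trees suffice there too, while the $\tw2D_2$-series column is handled by inducing from $\tw2D_4(q)$ to $\tw2D_5(q)$ (using \cite[Thm.~7.1]{DM16}) and then to $\tw2D_n(q)$.
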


\begin{table}[ht]
{\small\[\begin{array}{l|c|ccc|c|cccc}
  \qquad\rho& a_\rho\cr
\hline
   (n-1;-)& 0& 1\cr
 (n-2,1;-)& 1& b\pl k& 1\cr
    (n-2;1)& 2& e\pl k& .& 1\cr
\hline
 (n-3,2;-)& 2& a\pl kb\pl \binom{k}{2}& k& .& 1\cr
\hline
 (n-3,1^2;-)& 3& kb\pl \binom{k}{2}& b\pl k& .& .& 1\cr
  (n-3,1;1)& 3& k(b\pl e\pl k\mn1)& d\pl k& a\pl k& b& .& 1\cr
    (n-3;2)& 3& d\pl ke\pl \binom{k}{2}& .& c\pl k& .& .& .& 1\cr
   (-;n-1)& 3& b& .& .& .& .& .& .& 1\cr
\hline
\end{array}\]}
\caption{Approximate decomposition matrices for $\Spin_{2n}^-(q)$, $n\ge5$}
    \label {tab:Spin-}
\end{table}

\begin{proof}
This is completely analogous to the proof of Theorem~\ref{thm:decD+}.
The eight unipotent characters $\rho_1,\ldots,\rho_8$ displayed in
Table~\ref{tab:Spin-} all have $a$-value at most~3. Thus, arguing as in the
proof of Theorem~\ref{thm:decD+} and using Table~\ref{tab:smallDn} we see that
any $\ell$-modular constituent $\vhi$ of the $\rho_i$ either lies in the
principal series, or $\ell|(q^2+1)$ and $\vhi$ lies in the $\tw2D_2$-series.
\par
For $n=4$ the Sylow $\ell$-subgroups of $G$ are cyclic for all relevant primes
$\ell$, and the claim follows from the known Brauer trees. So now assume that
$n\ge5$.
By \cite[Thm.~5.8.13]{GJ11} the Iwahori-Hecke algebra $\cH(B_{n-1};q^2;q)$
for the principal series has a canonical basic set indexed by suitable Uglov
bipartitions. For $n\ge5$ the bipartitions indexing $\rho_1,\ldots,\rho_7$,
as well as $\rho_8$ when $\ell{\not|}(q^2+1)$, are Uglov, so these characters
lie in the basic set. Hence, there are at most eight PIMs in the principal
series (resp.~seven when $\ell\mid(q^2+1)$) involving one of the $\rho_i$;
since the unipotent characters form a basic set for the unipotent blocks by
\cite{GH91}, it must be exactly eight (resp.~seven).
\par
Finally, assume that $\ell|(q^2+1)$ and consider the series of the cuspidal
Brauer character $(-;1)$ of a Levi subgroup of type $\tw2D_2$. It follows from
the known Brauer trees that there are two PIMs of $\tw2D_4(q)$ in the
$\tw2D_2$ Harish-Chandra series, with unipotent parts $(-;3)$ and $(1;2)$
respectively. Harish-Chandra induction to $\tw2D_5(q)$ shows that there is
exactly one PIM in this series containing one of the $\rho_i$, namely
$\rho_8$ with label $(-;4)$ (see \cite[Thm.~7.1]{DM16}). The Harish-Chandra
induction of $(-;4)$ from $\tw2D_5(q)$ to $\tw2D_n(q)$ only contains $\rho_8$
among our $\rho_i$, so there is at most one PIM in this series that contributes
to the first eight rows of the decomposition matrix.
\end{proof}

\begin{rem}
To complete the determination of the part of the decomposition matrix displayed
in Table~\ref{tab:Spin-} it would be enough to compute the corresponding part
of the decomposition matrix of the Hecke algebra $\cH(B_{n-1};q^2;q)$. When
$\ell$ is large, this can be done using canonical basis elements of Fock
spaces (see \cite[\S 6.4]{GJ11}).
\end{rem}

\subsection{Unipotent Brauer characters of low degree}

As a consequence we can determine those irreducible Brauer characters of low
degree which occur as constituents of $\rho_1,\ldots,\rho_8$. For this we need
the $\ell$-modular decomposition of the smallest three unipotent characters
$\rho_1=1_G$, $\rho_2$ and $\rho_3$; here, for an integer $m$, we set
$$\kappa_{\ell,m}:= \left\{ \begin{array}{ll}
		1 & \text{if } \ell|m, \\
		0 & \text{otherwise.} \end{array} \right.$$

\begin{prop}[Liebeck]   \label{prop:decmatDn}
 Let $G=\Spin_{2n}^\pm(q)$, $n\ge3$ and let $\ell$ be a prime not dividing
 $q(q+1)$. The $\ell$-modular decomposition of $\rho_2,\rho_3$ is given by
 \[\begin{array}{l|lllll}
       1& 1\\
  \rho_2& a& 1\\
  \rho_3& b& .& 1\\
 \end{array}\]
 where $a=\kappa_{\ell,q^{n-1}\pm1}$, $b=\kappa_{\ell,q^n\mp1}$.
\end{prop}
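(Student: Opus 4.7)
The plan is to realise $\rho_2$ and $\rho_3$ as the two non-trivial irreducible constituents of the rank-three permutation representation $\Ind_P^G 1_P$, where $P$ is the maximal parabolic of $G=\Spin_{2n}^\pm(q)$ stabilising a singular $1$-space of the natural module, and then track the trivial composition factor through $\ell$-modular reduction.

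First I would verify the character identity
\[
\Ind_P^G 1_P \;=\; 1_G + \rho_2 + \rho_3,
\]
with total degree $|G/P|=(q^{n-1}\pm 1)(q^n \mp 1)/(q-1)$. The Levi of $P$ has type $D_{n-1}^{(\pm)}$, and an elementary calculation in the Weyl group (three $W(D_{n-1})$-orbits on the signed basis $\{\pm e_i\}$) shows that $\operatorname{End}_G(\Ind_P^G 1_P)$ is three-dimensional, with Harish-Chandra constituents indexed by the bipartitions $((n),\emptyset)$, $((n-1),(1))$ and $((n-1,1),\emptyset)$. Matching these against the generic degree formulas \cite[\S13.8]{Ca} for the symbols $\binom{n}{0},\binom{n-1}{1},\binom{1,n}{0,1}$ (and their non-split analogues) identifies the three constituents as $1_G$, $\rho_2$ and $\rho_3$.

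Next I would reduce the $\cO G$-permutation lattice $M=\cO[G/P]$ modulo $\ell$. By Theorem~\ref{thm:low a} combined with Proposition~\ref{prop:HC Dn}, every non-trivial composition factor of $\bar\rho_2$ or $\bar\rho_3$ has $a$-value at most~$3$ and lies in a small Harish-Chandra series; a degree comparison then shows that the only possibilities besides the trivial Brauer character are two distinguished irreducibles $\varphi_2,\varphi_3$. One thus writes
\[
\bar\rho_2 \;=\; a\cdot 1 + \varphi_2, \qquad \bar\rho_3 \;=\; b\cdot 1 + \varphi_3,
\]
with $a,b\in\{0,1\}$.

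Finally I would determine $a$ and $b$ by Brauer reciprocity: $a=d_{\rho_2,1}$ equals the multiplicity of $\rho_2$ in the ordinary character of the projective cover of the trivial Brauer character, and similarly for $b$. When $\ell$ divides a single cyclotomic polynomial $\Phi_d(q)$ the Sylow $\ell$-subgroup of~$G$ is cyclic, and the Brauer tree of the principal unipotent block identifies the neighbours of the trivial vertex explicitly, giving $a=1$ iff $\ell\mid q^{n-1}\pm 1$ and $b=1$ iff $\ell\mid q^n\mp 1$. For primes dividing several cyclotomic factors one reduces to the cyclic-defect case by Harish-Chandra induction from a minimal cuspidal pair, exploiting the basic-set property of unipotent characters \cite{GH91} and the unitriangularity of the unipotent decomposition matrix. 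The main obstacle is this last step: one must rule out contributions to $a$ or $b$ coming from cyclotomic factors of $|G/P|$ other than the stated ones, which requires tight control over the principal unipotent block's PIM structure and is cleanest in the cyclic-defect setting.
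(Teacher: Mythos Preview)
Your first step, realising $\rho_2$ and $\rho_3$ as the non-trivial constituents of the rank-3 permutation module $\cO[G/P]$, coincides with the paper's approach. After that the two arguments diverge substantially, and yours has genuine gaps.

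The paper does not attempt to rebuild the composition structure from Theorem~\ref{thm:low a} or Proposition~\ref{prop:HC Dn}. Instead it simply quotes Liebeck \cite[Thm.~2.1 and pp.~14/15]{Li86}, who determined the submodule lattice of this rank-3 permutation module directly: over $k$ it has either three composition factors (so all $\bar\rho_i$ are irreducible) or four, in which case head and socle are trivial and exactly one of $\bar\rho_2,\bar\rho_3$ acquires a single extra trivial constituent. Deciding which one is then pure block theory: by Fong--Srinivasan \cite[\S13]{FS}, $\rho_2$ lies in the principal $\ell$-block only when $\ell\mid q^{n-1}\pm1$, and $\rho_3$ only when $\ell\mid q^n\mp1$, so the one outside the principal block cannot pick up a trivial constituent. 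No Brauer trees, no cyclic defect, no Harish-Chandra reduction are needed.

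Your route has two concrete problems. First, the ``degree comparison'' meant to force $\bar\rho_i=a_i\cdot 1+\varphi_i$ with $a_i\in\{0,1\}$ is not justified: Proposition~\ref{prop:HC Dn} (which in any case requires $n\ge4$ and $\ell\nmid 6q$, stricter than the hypotheses here) only restricts the Harish-Chandra series of the constituents, and the approximate tables give large upper bounds like $e+k$ or $b+k$ for the multiplicity of the trivial, not~$1$. Without Liebeck's structural input (or a Hecke-algebra computation of the relevant decomposition numbers, which the paper remarks would also work) you have no way to cap the multiplicity of the trivial constituent. Second, your claim that a prime dividing a single $\Phi_d(q)$ yields cyclic Sylow $\ell$-subgroups is false for $\tw{(2)}D_n$: for instance $\Phi_4(q)$ divides $q^{2i}-1$ for every even $i\le n-1$, so already for $n\ge5$ the Sylow subgroup for $\ell\mid q^2+1$ is non-cyclic. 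Your acknowledged ``main obstacle'' is therefore a real one, and the proposed workaround via Harish-Chandra induction is not a substitute for the block-distribution argument the paper actually uses.
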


\begin{proof}
First note that if $\ell\nmid q^{n-1}\pm1$ (resp. $\ell\nmid q^n\mp1$)
then $\rho_3$ (resp. $\rho_2$) does not lie in the principal block (see
\cite[\S 13]{FS}).

The unipotent characters $\rho_2,\rho_3$ are the two non-trivial constituents
of the Harish-Chandra induction of the trivial character from a Levi subgroup
of type $\Spin_{2n-2}^\pm(q)$. This induced character is the permutation
character of the rank~3 permutation module of $G$ on singular 1-spaces.
The claim thus follows from Liebeck \cite[Thm.~2.1 and pp.14/15]{Li86}.
Indeed, in the case~(1) of \cite[p.~10]{Li86} there are obviously just three
$\ell$-modular
composition factors of that permutation module, so all $\rho_i$ remain
irreducible. In the case~(2) there are four composition factors, and those
in head and socle clearly are trivial, so one of the $\rho_i$ is reducible.
Which one that is follows from the block distribution.
\end{proof}

Note that the zeroes in the decomposition matrix in
Proposition~\ref{prop:decmatDn} also follow from
Tables~\ref{tab:Spin+} and~\ref{tab:Spin-}, and the multiplicity of the
trivial character in the $\ell$-modular reductions of $\rho_2$ and $\rho_3$
could also easily be computed from the corresponding Hecke algebras.

\begin{cor}   \label{cor:boundD}
 Under the hypotheses of Theorem~\ref{thm:decD+} resp.~\ref{thm:decD-}
 assume that $\vhi\in\IBr(G)$ is a constituent of one of the unipotent
 characters $\rho_i$, $1\le i\le 8$, in Table~\ref{tab:Spin+}
 resp.~\ref{tab:Spin-}. If
 $$\vhi(1)<\begin{cases}
   q^{4n-10}& \text{for }G=\Spin_{2n}^+(q), \text{ resp.}\\
   q^{4n-10}-q^9& \text{for }G=\Spin_{2n}^-(q),\end{cases}$$
 then $i\le3$.
 In particular, $\vhi$ is as described in Proposition~\ref{prop:decmatDn}.
\end{cor}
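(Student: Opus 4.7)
The plan is to exploit the (upper\nobreakdash-)triangular structure of the decomposition matrix encoded in Tables~\ref{tab:Spin+} and~\ref{tab:Spin-}. Let $\vhi_1,\ldots,\vhi_8\in\IBr(G)$ denote the unipotent Brauer characters labelled by $\rho_1,\ldots,\rho_8$ via the unitriangular basic set. By the triangular shape of the displayed part of the decomposition matrix, any constituent $\vhi$ of some $\rho_i$ with $1\le i\le 8$ equals $\vhi_j$ for some $j\le i$. If $j\le 3$ then $\vhi$ is one of the three characters described in Proposition~\ref{prop:decmatDn} and we are done. It therefore suffices to show that $\vhi_j(1)$ exceeds the bound of the corollary whenever $j\in\{4,\ldots,8\}$.

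For this, I would combine the identity $\rho_j(1)=\sum_{k\le j}d_{jk}\vhi_k(1)$ (with $d_{jj}=1$) with the upper bounds $d_{jk}\le\bar d_{jk}$ read off the relevant table, together with the elementary inequality $\vhi_k(1)\le\rho_k(1)$ (which follows directly from the same identity applied to~$k$, since all $d_{k\ell}$ are non-negative and $d_{kk}=1$), to obtain
\[
  \vhi_j(1)\;\ge\;\rho_j(1)-\sum_{k<j}\bar d_{jk}\,\rho_k(1).
\]
In any given admissible case only one of the parameters $a,b,c,d,e$ is nonzero (and equals~$1$), so the right-hand side is a concrete polynomial in $q$ and in $k=n-5$. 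It then remains to substitute the explicit degrees of $\rho_1,\ldots,\rho_8$ furnished by Lusztig's symbol formulas and verify the desired bound case by case.

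The main obstacle is the bookkeeping of that final verification across all five cyclotomic possibilities and the five new rows $j\in\{4,\ldots,8\}$. The computation is nonetheless essentially transparent on the level of leading $q$-terms: since each $\rho_j$ with $j\ge 4$ has $a$-value at least~$2$, its degree polynomial dominates, asymptotically in~$q$, the subtracted contributions $\bar d_{jk}\rho_k(1)$ for $k<j$, once one tracks the polynomial-in-$k$ coefficients. The resulting lower bound matches $q^{4n-10}$ in the split case (the critical row being $j=4$); the slightly weaker bound $q^{4n-10}-q^9$ in the non-split case absorbs the contribution of the $\tw2D_2$-series to $\rho_8$ when $\ell\mid(q^2+1)$. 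Direct substitution then handles the base case $n=5$ (i.e.\ $k=0$), completing the argument.
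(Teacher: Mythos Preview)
Your proposal is correct and follows essentially the same route as the paper's own proof. Both arguments use the established unitriangularity to identify the possible constituents as $\vhi_1,\ldots,\vhi_8$, then for $j\ge4$ derive the lower bound $\vhi_j(1)\ge\rho_j(1)-\sum_{k<j}\bar d_{jk}\,\rho_k(1)$ from the upper bounds in the tables together with $\vhi_k(1)\le\rho_k(1)$, and finally leave the verification that this exceeds the stated bound as a routine (if tedious) polynomial computation.
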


\begin{proof}
Let $\vhi_j$, $1\le j\le 8$, denote the Brauer character of the head (and
socle) of the PIM given by column~$j$ of the approximate decomposition matrices
in Table~\ref{tab:Spin+}, respectively~\ref{tab:Spin-}. By the proven partial
triangularity, any constituent $\vhi$ of one of the $\rho_i$ is among the
$\vhi_j$, $1\le j\le 8$.  \par
Now for each $i$, $4\le i\le 8$, a lower bound for $\vhi_i(1)$ can be computed
as follows: we certainly have $\vhi_j(1)\le\rho_j(1)$ for $1\le j\le i-1$.
Then from the $i$th row of the given approximation of the decomposition matrix
with that vector of upper bounds we obtain a lower bound for $\vhi_i(1)$. This
is a polynomial in~$q$ (depending on $n$) of degree at least~$4n-10$, and
subtracting the bound $B$ in the statement we obtain a polynomial of positive
degree and positive highest coefficient. It is now a routine calculation to
see that this difference is positive for all relevant $i,q,n$, so
$\vhi_i(1)\ge B$ for $i\ge4$.
\end{proof}

\section{Small Harish-Chandra series in odd-dimensional orthogonal groups}
\label{sec:orth-odd}
We now consider the odd-dimensional spin groups $\Spin_{2n+1}(q)$. Again we
assume that $q$ is odd in order to be able to apply the results from
Section~\ref{sec:bound}.

\subsection{Small Harish-Chandra series}

Again we first determine cuspidal modules with small $a$-value.

\begin{lem}   \label{lem:cuspBn}
 Assume that $G_\ad$ is simple of type $B_m(q)$ $(q$ odd, $m\ge2)$,
 and let $\ell$ be a prime not dividing $2q$. If $\vhi\in\IBr(G)$ is a
 cuspidal unipotent Brauer character with $a_\vhi\le4$ then $\vhi$ occurs in
 Table~\ref{tab:cuspBn}.
\end{lem}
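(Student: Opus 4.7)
The plan is to argue in parallel with Lemma~\ref{lem:cuspDn}. Given a cuspidal unipotent Brauer character $\vhi \in \IBr(G)$ with $a_\vhi \le 4$, Theorem~\ref{thm:low a} applied with $\bL = \bG$ produces an $F$-stable cuspidal unipotent class $C$ of $\bG_\ad$ satisfying $a_{d(C)} \le a_\vhi \le 4$. By Proposition~\ref{prop:bound a} every $F$-stable cuspidal class satisfies $a_{d(C)} \ge a_{d(C_\mi)}$, so the rank $m$ must be small enough that the latter value does not exceed~$4$. Reading off the $B_m$ row of Table~\ref{tab:low-a} — whose values are $1,3,6,7,7,11,\ldots$ for $m=2,3,4,5,6,7,\ldots$ — this immediately forces $m\in\{2,3\}$, so $G_\ad$ is of type $B_2(q)$ or $B_3(q)$.

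It therefore remains to enumerate, for $G$ with $G_\ad$ of type $B_2(q)$ or $B_3(q)$ and $\ell\nmid 2q$, the cuspidal unipotent Brauer characters of $a$-value at most~$4$. Under these hypotheses, the unipotent characters form a basic set for the union of unipotent $\ell$-blocks of $G$ by \cite{GH91} (as used already in the proof of Lemma~\ref{lem:cuspDn}), and the unipotent part of the $\ell$-modular decomposition matrix is uni-triangular with respect to this basic set, with an ordering compatible with the $a$-function. The relevant portions of the unipotent decomposition matrices of $\Sp_4(q)$ and $\Spin_7(q)$ are available in the literature, treated case-by-case according to which cyclotomic factor $\Phi_d(q)$ of $|G|$ is divisible by $\ell$. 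From these one reads off those irreducible Brauer characters $\vhi$ that are cuspidal — i.e.\ do not appear as constituents of any $\RLG(\psi)$ with $\bL<\bG$ a proper $1$-split Levi subgroup and $\psi\in\IBr(L)$ cuspidal — and retains only those with $a_\vhi\le 4$.

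The real work is bookkeeping: for each congruence type of $\ell$ modulo the cyclotomic factors of $|G|$, one has to inspect whether the $\ell$-modular reduction of an ordinary non-cuspidal unipotent character contributes a genuine cuspidal modular constituent, and compute its $a$-invariant via its projective cover. This finite case analysis, involving only the two ranks $m=2,3$, produces Table~\ref{tab:cuspBn}. The main subtlety to be careful about is the small prime $\ell=3$, especially when $3\mid(q^2+q+1)$ or $3\mid(q^2-q+1)$, where one should check that no extra cuspidal Brauer character of small $a$-value slips through; as in Lemma~\ref{lem:cuspDn}, if such borderline primes cause trouble one restricts the statement by excluding them explicitly.
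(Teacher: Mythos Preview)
Your argument is correct and follows essentially the same route as the paper: apply Theorem~\ref{thm:low a} and Proposition~\ref{prop:bound a} to force $m\in\{2,3\}$, then appeal to the known unipotent decomposition matrices in those ranks. The only differences are cosmetic: the paper cites \cite{Ge93} for the basic set and \cite{HN14} for the uni-triangularity and explicit decomposition matrices of $\SO_7(q)$, whereas you cite \cite{GH91} and leave the small-rank references unspecified; also, your hedging about $\ell=3$ is unnecessary, since the lemma as stated includes that case and the cited decomposition data cover it.
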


\begin{table}[ht]
\[\begin{array}{c|lc|l}
 G_\ad& \vhi& a_\vhi& \text{condition}\\
\hline
 B_2(q)&   B_2& 1& \text{always}\\
       & -.1^2& 4& \ell|(q+1)(q^2+1)\\
 B_3(q)& 1^3.-& 4& \ell|(q+1)\\
       & B_2:1^2& 4& \ell|(q+1)(q^2-q+1)\\
\hline
\end{array}\]
\caption{Cuspidal unipotent Brauer characters in type $B_m$}   \label{tab:cuspBn}
\end{table}

\begin{proof}
Let $\vhi\in\IBr(G)$ be as in the statement. By Theorem~\ref{thm:low a},
the projective cover of $\vhi$ is a direct summand of $\Gamma_C^G$ for a
cuspidal unipotent class $C$ of $G_\ad$ with $a_{d(C)}\le4$. By
Proposition~\ref{prop:bound a} this implies $m\in\{2,3\}$.
For these groups the unipotent characters form a basic set for the union of
unipotent $\ell$-blocks of $G$ \cite{Ge93}, and the decomposition matrix with
respect to these is uni-triangular (see \cite{HN14}). From the explicit
knowledge of these decomposition matrices, it follows that $\vhi$ must be as
in Table~\ref{tab:cuspBn}.
\end{proof}

\begin{prop}   \label{prop:HC Bn}
 Assume that $n\ge2$ and $\ell{\not|}\,q(q+1)$. Then the unipotent Brauer
 characters of $\Spin_{2n+1}(q)$, $q$ odd, with $a_\vhi\le4$ lie in a
 Harish-Chandra series as given in Table~\ref{tab:smallBn}.
\end{prop}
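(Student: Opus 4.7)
My plan is to follow the template of Proposition~\ref{prop:HC Dn}, adapted to type $B_n$. Let $\vhi\in\IBr(G)$ be a unipotent Brauer character of $G=\Spin_{2n+1}(q)$ with $a_\vhi\le 4$. Theorem~\ref{thm:low a} produces a cuspidal pair $(L,\la)$ with $\la\in\IBr(L)$ cuspidal unipotent and $a_\la\le a_\vhi\le 4$. Since a Levi subgroup of $\bG$ has semisimple part of type $B_k\times A_{j_1}\times\cdots\times A_{j_r}$, the character $\la$ decomposes as an outer tensor product over these factors, and its $a$-value is additive. The classification of admissible series therefore reduces to combining cuspidal unipotent Brauer characters on the classical factors so that the total $a$-value stays at most~$4$.

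The second step is to tabulate the allowed cuspidal Brauer characters on each factor under the hypothesis $\ell\nmid q(q+1)$. Applying Lemmas~\ref{lem:cuspDn} and~\ref{lem:cuspBn}: on an $A_m$-factor the only option is the cuspidal $1^3$ on $A_2$ with $\ell\mid q^2+q+1$ and $a=3$, since the cuspidal on $A_1$ requires $\ell\mid q+1$ and is excluded, while for $m\ge 3$ the cuspidal on $A_m$ already has $a\ge 6$; on a $B_k$-factor the options are the ordinary cuspidal of $B_2$ with $a=1$ (always available), the cuspidal $-.1^2$ in $B_2$ with $a=4$ when $\ell\mid q^2+1$, and the cuspidal $B_2{:}1^2$ in $B_3$ with $a=4$ when $\ell\mid q^2-q+1$. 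Enumerating all outer products with total $a_\la\le 4$, and taking into account the minimal rank required for $L$ to embed as a $1$-split Levi of $\bG$, yields a short finite list of cuspidal pairs parametrised by $\ell$-adic conditions and by $n$. This list should be exactly Table~\ref{tab:smallBn}.

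Some multi-factor combinations such as $(B_2\times B_2,\,B_2\otimes B_2)$ or $(B_2\times A_2,\,B_2\otimes 1^3)$ still satisfy $a_\la\le 4$ on combinatorial grounds, but for $n$ above a certain threshold they need not actually contribute a Brauer character with $a_\vhi\le 4$ in the resulting Harish-Chandra series of $G$. To rule these out, I would argue as in Lemma~\ref{lem:a in A}, using monotonicity of the $a$-value under Harish-Chandra induction: by embedding $L$ in a slightly larger $1$-split Levi $L'$ of $\bG$ and using the decomposition matrices of the small-rank spin groups (see in particular \cite{HN14}), one can lower-bound the $a$-values of the modular constituents of $R_L^{L'}(\la)$ and thereby exclude $(L,\la)$ once $n$ passes the threshold. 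The main obstacle will be precisely this case-by-case bookkeeping: for each multi-factor cuspidal pair one must pin down the exact threshold in $n$ beyond which it is eliminated, and confirm that what remains coincides with Table~\ref{tab:smallBn}.
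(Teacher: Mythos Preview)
Your proposal is correct and follows essentially the same route as the paper: reduce via Theorem~\ref{thm:low a} and the additivity of the $a$-value to enumerating cuspidal unipotent Brauer characters on the simple factors of $L$ (using Lemmas~\ref{lem:cuspDn} and~\ref{lem:cuspBn}), and then eliminate multi-factor series above a threshold in $n$ by embedding into a larger Levi and invoking Lemma~\ref{lem:a in A} or explicit small-rank decomposition data. One small slip: you list $(B_2\times B_2,\,B_2\otimes B_2)$ among the combinations to be ruled out, but as you noted yourself a $1$-split Levi of $B_n$ has at most one $B_k$ factor, so this case never arises; the genuine multi-factor cases to dispose of are $(A_2,1^3)$ for $n\ge4$, $(B_2A_2,B_2\otimes1^3)$ for $n\ge6$, and $(B_3,B_2{:}1^2)$ for $n\ge4$, exactly as the paper handles them.
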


\begin{table}[ht]
\[\begin{array}{c|l}
 (L,\la)& \text{conditions}\\
\hline
 (\emptyset,1),\ (B_2,B_2)& \text{always}\\
 (B_2,-.1^2)& \ell|(q^2+1)\\
 (A_2,1^3)& \ell|(q^2+q+1),\ n=3\\
 (B_2A_2,B_2\otimes 1^3)& \ell|(q^2+q+1),\ n=5\\
 (B_3,B_2:1^2)& \ell|(q^2-q+1),\ n=3\\
\hline
\end{array}\]
\caption{Unipotent Harish-Chandra series in $B_n(q)$ with small $a$-value}   \label{tab:smallBn}
\end{table}

\begin{proof}
Let $\vhi\in\IBr(G)$ be an $\ell$-modular unipotent Brauer character with
$a_\vhi\le 4$. So $\vhi$ lies in the Harish-Chandra series associated to a
cuspidal pair $(L,\la)$ with $a_\la\le4$. By Theorem~\ref{thm:low a} and
Lemmas~\ref{lem:cuspDn} and~\ref{lem:cuspBn}, $L$ is either a torus or of one
of the types
$$A_2(q),\ B_2(q),\ B_2(q)A_2(q),\text{ or } B_3(q),$$
with $\la$ the corresponding outer tensor product of the cuspidal Brauer
characters given in Tables~\ref{tab:cuspDn} and~\ref{tab:cuspBn}. (Note that
Levi subgroups $\bL$ of $\bG$ have $|Z(\bL)/Z^\circ(\bL)|\le2$ and so one
can invoke both Lemma \ref{lem:cuspDn} and Lemma \ref{lem:cuspBn})
We deal with the various series in turn. First note that a Levi subgroup of
$G$ of type $A_2$ is contained in one of type $A_3$ when $n\ge4$, and so by
Lemma~\ref{lem:a in A}(b) the $A_2$-series (for $\ell|(q^2+q+1)$) can only
contribute when $n=3$. Similarly, the $B_2A_2$-series of $\la=B_2\otimes 1^3$
can only contribute when $n=5$. From the known Brauer trees it follows that
for $n=4$ all $\ell$-modular Brauer characters in the $(B_3,B_2:1^2)$-series
for $\ell|(q^2-q+1)$ have $a$-value at least~6.
\end{proof}

For primes $\ell|(q+1)$ there are many further modular HC-series with
$a$-value at most~4; we will discuss those elsewhere.

\subsection{A triangular subpart of the decomposition matrix}
We give an approximation to the $\ell$-modular decomposition matrix of
$\Spin_{2n+1}(q)$ for certain unipotent Brauer characters of $a$-value at
most~4. Again we only consider primes $\ell$ not dividing $q+1$. The induction
basis is given by the cases $n=3$ computed in \cite{HN14} and the case $n=4$
treated below. Recall that unipotent characters of $\Spin_{2n+1}(q)$ are
labelled by symbols of rank~$n$ and odd defect (see \cite[\S13.8]{Ca}).

\begin{thm}   \label{thm:decB}
 Let $G=\Spin_{2n+1}(q)$ with $q$ odd and $n\ge4$, and $\ell>5$ a prime not
 dividing $q(q+1)$. Then the first fourteen rows of the decomposition matrix of
 the unipotent $\ell$-blocks of $G$ are approximated from above by
 Table~\ref{tab:SpinOdd}, where $k=n-4$ and
 $$(a,b,c,d)=\begin{cases}
                           (1,0,0,0)& \text{when $\ell|(q^2+q+1)$},\cr
                           (0,1,0,0)& \text{when $\ell|(q^2+1)$},\cr
                           (0,0,1,0)& \text{when $\ell|(q^2-q+1)$},\cr
                           (0,0,0,1)& \text{when $\ell|(q^4+1)$},\cr
                           (0,0,0,0)& \text{otherwise}.\cr
 \end{cases}$$
 The Harish-Chandra series are indicated in the last line; here the twelfth
 PIM is in the principal series unless $\ell|(q^2+1)$.
\end{thm}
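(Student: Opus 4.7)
The strategy mirrors those of Theorems~\ref{thm:decD+} and~\ref{thm:decD-}. Let $\rho_1,\ldots,\rho_{14}$ denote the fourteen unipotent characters listed (in order) in Table~\ref{tab:SpinOdd}. The degree formula \cite[\S13.8]{Ca} shows that $a_{\rho_i}\le 4$ for every $i$. Hence, by Brauer reciprocity, any $\ell$-modular constituent $\vhi$ of some $\rho_i$ satisfies $a_\vhi\le 4$, so by Proposition~\ref{prop:HC Bn} it lies in one of the Harish-Chandra series listed in Table~\ref{tab:smallBn}. The plan is to treat each of these series in turn, matching the relevant PIMs to the $\rho_i$ and bounding their decomposition numbers from above.

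For the principal series, the associated Hecke algebra is $\cH(B_n;q^2;q)$, whose $\ell$-modular decomposition matrix is unitriangular with respect to a canonical basic set indexed by Uglov bipartitions \cite[Thm.~5.8.13]{GJ11}. I would verify that among $\rho_1,\ldots,\rho_{14}$, exactly those labelled by Uglov bipartitions at the relevant prime belong to the basic set (in particular, excluding the fifth and twelfth when they sit in the $B_2$-series, and adjusting for $\ell|(q^2+1)$). The triangular shape of the corresponding columns in Table~\ref{tab:SpinOdd} then follows, and the upper bounds on the entries are obtained by Harish-Chandra inducing the PIMs from $\Spin_{9}(q)$ (the case $n=4$) to $\Spin_{2n+1}(q)$; since all Uglov labels involved are stable under this process, the bounds propagate in~$k=n-4$ as indicated, with the explicit $a,b,c,d$ coefficients coming from which cyclotomic factor of $|G|$ the prime $\ell$ divides.

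For the $(B_2,B_2)$-series, which is present for every $\ell$, the ordinary $B_2$-series of $\Spin_{2n+1}(q)$ contains only a handful of unipotent characters, and among the $\rho_i$ exactly those marked with ``$B_2$'' in the last row of Table~\ref{tab:SpinOdd} occur once in $R_{L}^G(\la)$ with $L$ of type $B_2$ and $\la$ the cuspidal unipotent character. Since $\la$ is already irreducible modulo $\ell$ (its projective cover is a single lift by \cite[Thm.~A]{DM17}), the PIMs of $G$ in this series are in bijection with those of the associated Hecke algebra of type $B_{n-2}$, and the same Uglov-basis argument gives the unitriangular block. The $(B_2,-.1^2)$-series occurring when $\ell|(q^2+1)$ is handled analogously: I would use the fact that this cuspidal Brauer character is known in $\Spin_5(q)$ and propagate the unique PIM in the series upward via Harish-Chandra induction, identifying which $\rho_i$ it contains.

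The induction base is the case $n=4$, where the unipotent $\ell$-decomposition matrix of $\Spin_9(q)$ can be obtained either from \cite{HN14} for the case $n=3$ together with a direct Harish-Chandra induction argument and the basic-set property of the unipotent characters \cite{Ge93}, or (for cyclic defect) from the known Brauer trees. I expect the main obstacle to be the bookkeeping step of verifying, for each cyclotomic case $(a,b,c,d)$, that the upper bounds obtained from Harish-Chandra induction of the PIMs at $\Spin_9(q)$ match the entries written in Table~\ref{tab:SpinOdd}, and in particular that no extra PIM from an exotic Harish-Chandra series (ruled out by Proposition~\ref{prop:HC Bn} only under the standing hypothesis $\ell\nmid q(q+1)$) contributes to the first fourteen rows. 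Once this is checked case by case using the classification in Table~\ref{tab:smallBn}, the theorem follows.
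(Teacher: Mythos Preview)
Your overall strategy matches the paper's: restrict to the Harish-Chandra series from Table~\ref{tab:smallBn}, establish unitriangularity via the canonical basic sets of the relevant Hecke algebras, and obtain the explicit upper bounds by Harish-Chandra induction from the base case $n=4$. However, there are two concrete technical errors in your identification of the Hecke algebras that would cause the argument to fail as written.

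First, for the principal series of $\Spin_{2n+1}(q)$ (split type $B_n$) the Iwahori--Hecke algebra is the \emph{equal-parameter} algebra $\cH(B_n;q;q)$, not $\cH(B_n;q^2;q)$; the unequal-parameter algebra $\cH(B_{n-1};q^2;q)$ is the one arising for the principal series of $\tw2D_n(q)$. Accordingly, the correct basic-set result is \cite[Thm.~5.8.5]{GJ11} (FLOTW bipartitions), not \cite[Thm.~5.8.13]{GJ11} (Uglov bipartitions). Second, for the $(B_2,B_2)$-series the relative Hecke algebra is $\cH(B_{n-2};q^3;q)$, again with a FLOTW basic set \cite[Thm.~5.8.2]{GJ11}; moreover one needs to invoke Dipper's theorem (using that the cuspidal unipotent character of $B_2$ is irreducible modulo $\ell$ and reduction stable) to know that the decomposition matrix of this Hecke algebra embeds into that of~$G$. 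Your phrase ``the same Uglov-basis argument'' is therefore doubly off.

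Two smaller points: the characters in the ordinary $B_2$-series among the $\rho_i$ are $\rho_2,\rho_6,\rho_{11}$, not the fifth and twelfth as you write; and the paper disposes of the $(B_2A_2,B_2\otimes 1^3)$-series at $n=5$ by an explicit Brauer-tree check, which you only gesture at. With the Hecke-algebra parameters and references corrected, your outline becomes the paper's proof.
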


\begin{table}[htbp]
{\small\[\begin{array}{l|c|c|cccc|cccc|c|cccc}
  \quad\rho& a_\rho\cr
\hline
    \binom{n}{-}& 0& 1\cr
\hline
\binom{0,1,n}{-}& 1& .& 1\cr
  \binom{0,1}{n}& 1& .& .& 1\cr
  \binom{1,n}{0}& 1& b\pl k& .& .& 1\cr
  \binom{0,n}{1}& 1& d\pl k& .& .& .& 1\cr
\hline
\binom{0,2,n-1}{-}& 2& .& k& .& .& .& 1\cr
  \binom{0,2}{n-1}& 2& b& .& b\pl k& .& .& .& 1\cr
  \binom{2,n-1}{0}& 2& a\pl kb\pl \binom{k}{2}& .& .& k& .& .& .& 1\cr
  \binom{0,n-1}{2}& 2& c\pl kd\pl \binom{k}{2}& .& .& .& b\pl k& .& .& .& 1\cr
\hline
  \binom{1,n-1}{1}& 3& k(b\pl d\pl k\mn1)& .& .& c\pl k& a\pl k& .& .& .& .& 1\cr
\hline
\binom{0,1,2,n}{1}& 4& .& d\pl k& .& .& .& b& .& .& .& .& 1\cr
\binom{0,1,2}{1,n}& 4& .& .& b\pl k& .& .& .& b& .& .& .& .& 1\cr
\binom{1,2,n}{0,1}& 4& kb\pl \binom{k}{2}& .& .& b\pl k& .& .& .& .& .& .& .& .& 1\cr
\binom{0,1,n}{1,2}& 4& kd\pl \binom{k}{2}& .& .& .& d\pl k& .& .& .& .& .& .& .& .& 1\cr
\hline
 \text{HC-series}& & ps& B_2& ps& ps& ps& B_2& ps& ps& ps& ps& B_2& ps/.1^2& ps& ps\cr
\end{array}\]}
\caption{Approximate decomposition matrices for $\Spin_{2n+1}(q)$, $n\ge4$}   \label {tab:SpinOdd}
\end{table}

\begin{proof}
Let first consider the case $n=4$. For primes $\ell>3$ dividing one of $q^2+q+1,q^2-q+1$ or
$q^4+1$, the Sylow $\ell$-subgroups of $G=\Spin_9(q)$ are cyclic, and the
Brauer trees of unipotent blocks are easily determined. In particular the
decomposition matrix is triangular in those cases. The unipotent decomposition
matrix for primes $3<\ell|(q-1)$ is the identity matrix by
\cite[Thm.~23.12]{CE}. Finally, for $\ell|(q^2+1)$ projective
characters of $G$ with the indicated decomposition into ordinary characters
can be obtained by Harish-Chandra inducing PIMs from a Levi subgroup of type
$\Spin_7(q)$, for which Sylow $\ell$-subgroups are cyclic. Harish-Chandra
restriction then shows that all of the non-zero entries "b" under the diagonal
have to be positive. This construction also gives the claim about the modular
Harish-Chandra series. (See also \cite[Thm.~8.2]{DM15} for an analogous
argument for $\Sp_8(q)$.)
\par
Now assume $n\ge5$. All character $\rho_1,\ldots,\rho_{14}$ occurring in the
table have $a$-value at most~4. So as before, their $\ell$-modular constituents
must lie in one of the Harish-Chandra series listed in Table~\ref{tab:smallBn}.
From the known Brauer trees it follows that for $n=5$ none of our characters
lies in the $(B_2A_2,B_2\otimes 1^3)$-series when $\ell|(q^2+q+1)$, so this
will not contribute. Harish-Chandra inducing the corresponding columns of the
approximate unipotent decomposition matrix from a Levi subgroup of semisimple
type $\Spin_{2n-1}(q)$ to $G$ gives the columns stated in
Table~\ref{tab:SpinOdd}, with the induction basis given by the case $n=4$
treated before. We just obtain one column with a character in the
$(B_2,.1^2)$-series, which contains exactly one of the $\rho_i$, so there is
at most one PIM in that series in the range of our table. As in the case $n=4$
we find three projectives containing one of the $\rho_i$ above the cuspidal
unipotent character of $B_2$. \par
All characters marked "ps" lie in the canonical basic set given by FLOTW
bipartitions of the Hecke algebra $\cH(B_n;q;q)$, whose $\ell$-decomposition
matrix is uni-triangular, see \cite[Thm.~5.8.5]{GJ11}.
Similarly, according to \cite[Thm.~5.8.2]{GJ11} the relative Hecke algebra
$\cH(B_{n-2};q^3;q)$ of the cuspidal unipotent character of $B_2(q)$ in
$G$ has a canonical basic set given by FLOTW bipartitions and all characters
marked "$B_2$" lie in that basic set. As the ordinary cuspidal unipotent
character of $B_2$ is irreducible modulo all primes and reduction stable,
the decomposition matrix of that Hecke algebra embeds into the decomposition
matrix of $G$ by Dipper's theorem, so the corresponding part of our decomposition
matrix is indeed lower triangular.
\end{proof}

\subsection{Unipotent Brauer characters of low degree}

We now describe the decomposition of the first five unipotent characters
from Theorem~\ref{thm:decB}.

\begin{prop}   \label{prop:decmatBn}
 Let $G=\Spin_{2n+1}(q)$, $n\ge2$. Assume that  $\ell$ is prime to $2q(q+1)$.
 Then the $\ell$-modular decomposition matrix of $\rho_1=1_G,\ldots,\rho_5$
 from Table~\ref{tab:SpinOdd} is given as follows:
 \[\begin{array}{l|lllll}
       1& 1\\
  \rho_2& .& 1\\
  \rho_3& .& .& 1\\
  \rho_4& a& .& .& 1\\
  \rho_5& b& .& .& .& 1\\
 \end{array}\]
 where $a=\kappa_{\ell,q^n-1}$ and $b=1-a$ if $\ell|(q^{2n}-1)/(q-1)$, and
 $a=b=0$ otherwise.
\end{prop}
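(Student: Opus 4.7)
The plan is to mimic the argument used for Proposition~\ref{prop:decmatDn}: identify $1_G$, $\rho_4$ and $\rho_5$ as the three unipotent constituents of the rank-$3$ permutation character of $G$ on the singular $1$-spaces of its natural $(2n+1)$-dimensional orthogonal module, and then read off the $\ell$-modular structure from Liebeck's description of that module.

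The easy part concerns $\rho_2$ and $\rho_3$. The approximate decomposition matrix in Table~\ref{tab:SpinOdd} has entry $0$ in positions $(2,1)$ and $(3,1)$, hence the multiplicities of $1_G$ in $\bar\rho_2$ and $\bar\rho_3$ are bounded above by~$0$ and must vanish. Equivalently, $\rho_2$ lies in the cuspidal $B_2$-Harish-Chandra series while $1_G$ is in the principal series, so no modular constituent can be shared.

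For $\rho_4,\rho_5$ and $1_G$ the key observation is that, if $\bL$ is the standard Levi of type $B_{n-1}$ in $\bG$ (the stabiliser of an isotropic line in the natural module), then by the Howlett--Lehrer comparison theorem $R_L^G(1_L)$ corresponds on the Weyl group side to $\mathrm{Ind}_{W_{B_{n-1}}}^{W_{B_n}}(1)$. Using Frobenius reciprocity and the branching rule for $W_{B_n}\supset W_{B_{n-1}}$ (remove one box from either component of a bipartition), the three irreducible constituents are the characters indexed by the bipartitions $((n);\emptyset)$, $((n-1,1);\emptyset)$ and $((n-1);(1))$. Under the symbol--bipartition bijection for $B_n$ these are precisely $\rho_1,\rho_4$ and $\rho_5$. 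Geometrically $R_L^G(1_L)$ is the permutation character on $(\bG/\bP)^F$, which is the classical rank-$3$ action of $\Spin_{2n+1}(q)$ on the $(q^{2n}-1)/(q-1)$ singular $1$-spaces of the natural module. Liebeck's description \cite[Thm.~2.1 and pp.~14/15]{Li86} of the $\ell$-modular structure of such a rank-$3$ permutation module then transfers verbatim from the type-$D$ argument. If $\ell\nmid(q^{2n}-1)/(q-1)$ the module has three irreducible composition factors, so $\bar\rho_4$ and $\bar\rho_5$ are irreducible and contain no trivial constituent, giving $a=b=0$. If $\ell\mid(q^{2n}-1)/(q-1)$ the module has four composition factors, the extra one being trivial and forced into the head and socle, so exactly one of $\bar\rho_4,\bar\rho_5$ acquires an additional trivial constituent of multiplicity~$1$.

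The final task is to decide which of $\rho_4,\rho_5$ picks up the extra trivial when $\ell\mid(q^{2n}-1)/(q-1)$. Since the trivial character lies in the principal $\ell$-block, the extra constituent must sit in whichever of $\rho_4,\rho_5$ is itself in that block. Using the block theory of Fong--Srinivasan for odd-dimensional orthogonal groups (see \cite[\S13]{FS}), or equivalently the generic $e$-Harish-Chandra series attached to the symbols $\binom{1,n}{0}$ and $\binom{0,n}{1}$, one checks that $\rho_4$ lies in the principal $\ell$-block exactly when $\ell\mid q^n-1$, and $\rho_5$ exactly when $\ell\mid q^n+1$. Since $(q^{2n}-1)/(q-1)=((q^n-1)/(q-1))(q^n+1)$ and $\ell\nmid q+1$, these two conditions are complementary within $\ell\mid(q^{2n}-1)/(q-1)$, and we obtain the stated values of $a$ and $b$. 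The main obstacle in turning this plan into a clean proof is precisely this block-theoretic dichotomy; the most transparent route is probably a reduction to $n=2$, where $\Spin_5(q)\cong\Sp_4(q)$ and the Brauer trees are explicit, followed by Harish-Chandra induction from a $B_{n-1}$-Levi to handle general~$n$.
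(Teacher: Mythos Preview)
Your approach is essentially the same as the paper's: both use Liebeck's analysis of the rank-$3$ permutation module on singular $1$-spaces to handle $\rho_4,\rho_5$, and both invoke the block distribution from Fong--Srinivasan to decide which of the two picks up the extra trivial constituent. Two remarks are worth making.

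First, your appeal to Table~\ref{tab:SpinOdd} for the zeroes only covers $n\ge4$ (and $\ell>5$), whereas the statement is for $n\ge2$. The paper deals with $n=3$ by citing \cite{HN14} directly; you should do the same (and note that $n=2$ is $\Sp_4(q)$, where everything is explicit). Your alternative Harish-Chandra series argument for the $(2,1)$ entry is fine and in fact works uniformly, but you also need the zeroes $(3,2)$, $(4,2)$, $(4,3)$, $(5,2)$, $(5,3)$, $(5,4)$, not just $(2,1)$ and $(3,1)$; the ones in rows~4 and~5 do follow from Liebeck, but you should say so.

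Second, your closing paragraph is unnecessary and slightly undermines the argument. The block-theoretic dichotomy is not an obstacle: the paper applies it directly, arguing simply that if $\ell\mid(q^n-1)$ then $\rho_5$ is not in the principal block (so $b=0$, hence $a=1$), and if $\ell\nmid(q^n-1)$ but $\ell\mid(q^{2n}-1)/(q-1)$ then $\rho_4$ is not in the principal block (so $a=0$, $b=1$). There is no need for a reduction to $n=2$ or an inductive step; the Fong--Srinivasan description of unipotent blocks gives this immediately.
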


\begin{proof}
The claim holds when $n=3$ by \cite{HN14}. For $n\ge4$, all the zeroes in the
table are correct by Table~\ref{tab:SpinOdd}, so we only need to discuss the
last two entries in the first column. \par
The entries $a,b$ are determined as follows: The unipotent characters
$\rho_4,\rho_5$ are constituents of the rank 3 permutation module of $G$ on
singular 1-spaces, and thus the claim for $\ell{\not|}(q^{2n}-1)/(q-1)$
follows from \cite[Thm.~2.1]{Li86}. Moreover, else we have that $a+b=1$.
If $\ell|(q^n-1)$ then $\rho_5$ does not lie in the
principal block (see \cite[\S12]{FS}), so that certainly $b=0$, and hence $a=1$.
On the other hand, if $\ell{\not|}(q^n-1)$, but $\ell|(q^{2n}-1)/(q-1)$,
then $\rho_4$ does not lie in the principal block, whence $a=0,b=1$.
\end{proof}

We obtain the following analogue of Corollary~\ref{cor:boundD}:

\begin{cor}   \label{cor:boundB}
 Under the hypotheses of Theorem~\ref{thm:decB} assume that $\vhi\in\IBr(G)$
 is an $\ell$-modular constituent of one of the unipotent characters
 $\rho_i$, $1\le i\le 14$, in Table~\ref{tab:SpinOdd}.
 If $\vhi(1)<\hlf q^{4n-6}-q^{3n-3}$ then $i\le5$.
 In particular, $\vhi$ is as described in Proposition~\ref{prop:decmatBn}.
\end{cor}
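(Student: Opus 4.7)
The plan is to imitate the argument used for Corollary~\ref{cor:boundD}. Write $\vhi_1,\ldots,\vhi_{14}$ for the Brauer characters at the heads of the fourteen PIMs appearing as columns in Table~\ref{tab:SpinOdd}. By the partial triangularity established in Theorem~\ref{thm:decB}, every $\ell$-modular constituent $\vhi$ of any of $\rho_1,\ldots,\rho_{14}$ is one of these $\vhi_j$. Since Proposition~\ref{prop:decmatBn} already identifies $\vhi_1,\ldots,\vhi_5$, it will suffice to produce, for each $i$ with $6\le i\le 14$, a lower bound on $\vhi_i(1)$ that strictly exceeds $\frac{1}{2}q^{4n-6}-q^{3n-3}$.

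Fix such an $i$. Using that the $i$-th diagonal entry is~$1$ by triangularity, and bounding each $\vhi_j(1)$ for $j<i$ by $\rho_j(1)$, the $i$-th row of Table~\ref{tab:SpinOdd} gives
\[\vhi_i(1)\;\ge\;\rho_i(1)-\sum_{j<i}d_{ij}\,\rho_j(1),\]
where the $d_{ij}$ are the upper bounds on the decomposition numbers read off from the table. Each $\rho_j(1)$ is an explicit polynomial in~$q$ computable from the corresponding symbol via \cite[\S13.8]{Ca}, and the parameters $a,b,c,d$ together with $k=n-4$ take only finitely many configurations (at most one of $a,b,c,d$ equals~$1$, the rest being~$0$).

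The main, and only, obstacle is then to verify in each of these finitely many configurations that the right-hand side above---a polynomial in $q$ whose leading term has degree at least $4n-6$---is at least $\frac{1}{2}q^{4n-6}-q^{3n-3}$ for every $n\ge 4$ and every odd prime power $q$ with $\ell\nmid q(q+1)$, $\ell>5$. The bound has been calibrated against the $a$-value $2$ characters $\rho_6,\ldots,\rho_9$, for which the inequality is tightest; the constant $\tfrac12$ matches the leading coefficient of the smallest $\rho_i(1)$ appearing for $i\ge 6$, and the correction $-q^{3n-3}$ absorbs the contributions of the already-subtracted $\rho_j(1)$ for $j<i$. After substituting degrees, each required comparison reduces to checking that a polynomial in~$q$ with positive leading coefficient and bounded sub-leading correction is positive for $q\ge 3$ and $n\ge 4$, a routine calculation. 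Once this is done, the final sentence of the corollary follows immediately from Proposition~\ref{prop:decmatBn}.
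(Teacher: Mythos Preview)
Your proposal is correct and follows essentially the same approach as the paper, which gives no explicit proof but simply presents the corollary as the analogue of Corollary~\ref{cor:boundD}. You have accurately reproduced that argument: use the partial triangularity from Theorem~\ref{thm:decB} to identify the constituents, bound each $\vhi_j(1)$ by $\rho_j(1)$ for $j<i$, and reduce the remaining verification to a routine positivity check of explicit polynomials in $q$.
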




\begin{thebibliography}{33}

\bibitem{AA07}
{\sc P. N. Achar, A.-M. Aubert}, Supports unipotents de faisceaux
  caract\`eres. \emph{J. Inst. Math. Jussieu \bf6} (2007), 173--207.

\bibitem{BV85}
{\sc D. Barbasch, D. Vogan}, Unipotent representations of complex semisimple
  groups. \emph{Ann. of Math. (2) \bf121} (1985), 41--110.

\bibitem{CE}
{\sc M. Cabanes, M. Enguehard}, \emph{Representation Theory of Finite
  Reductive Groups}. Cambridge University Press, Cambridge, 2004.

\bibitem{Ca}
{\sc R. Carter}, \emph{Finite Groups of Lie type: Conjugacy Classes and
  Complex Characters}. Wiley, Chichester, 1985.

\bibitem{DM91}
{\sc F.~Digne, J.~Michel}, \emph{Representations of Finite Groups of Lie
  Type}. Volume~21 of London Mathematical Society Student Texts. Cambridge
  University Press, Cambridge, 1991.

\bibitem{DM15}
{\sc O. Dudas, G. Malle}, Decomposition matrices for low rank unitary groups.
  \emph{Proc. London Math. Soc. \bf110} (2015), 1517--1557.

\bibitem{DM16}
{\sc O. Dudas, G. Malle}, Decomposition matrices for exceptional groups at
  $d=4$. \emph{J. Pure Appl. Algebra \bf220} (2016), 1096--1121.

\bibitem{DM17}
{\sc O. Dudas, G. Malle}, Modular irreducibility of cuspidal unipotent
  characters. Submitted, 2016.

\bibitem{FS}
{\sc P. Fong, B. Srinivasan}, The blocks of finite classical groups.
  \emph{J. Reine Angew. Math. \bf396} (1989), 122--191.

\bibitem{Ge93}
{\sc M. Geck}, Basic sets of Brauer characters of finite groups of
  Lie type II. \emph{J. London Math. Soc. \bf47} (1993), 255--268.

\bibitem{Ge96}
{\sc M. Geck}, On the average values of the irreducible characters of finite
  groups of Lie type on geometric unipotent classes. \emph{Doc. Math. \bf1}
  (1996), 293--317.

\bibitem{GH91}
{\sc M. Geck, G. Hiss}, Basic sets of Brauer characters of finite groups of
  Lie type. \emph{J. Reine Angew. Math. \bf418} (1991), 173--188.

\bibitem{GHM}
{\sc M. Geck, G. Hiss, G. Malle}, Cuspidal unipotent Brauer characters.
  \emph{J. Algebra \bf168} (1994), 182--220.

\bibitem{GJ11}
{\sc M. Geck, N. Jacon}, \emph{Representations of Hecke Algebras at Roots of
  Unity}. Springer Verlag, London, 2011.

\bibitem{GM96}
{\sc M.~Geck, G.~Malle}, Cuspidal unipotent classes and cuspidal Brauer
  characters. \emph{J. London Math. Soc. (2) \bf53} (1996), 63--78.

\bibitem{GM00}
{\sc M.~Geck, G.~Malle}, On the existence of a unipotent support for the
  irreducible characters of a finite group of Lie type. \emph{Trans. Amer.
  Math. Soc. \bf352} (2000), 429--456.

\bibitem{GP92}
{\sc M. Geck, G. Pfeiffer}, Unipotent characters of the Chevalley groups
  $D_4(q)$, $q$ odd. \emph{Manuscripta Math. \bf76} (1992), 281--304.

\bibitem{HN14}
{\sc F. Himstedt, F. Noeske}, Decomposition numbers of $\SO_7(q)$ and
  $\Sp_6(q)$. \emph{J. Algebra \bf413} (2014), 15--40.

\bibitem{Ja90}
{\sc G. James}, The decomposition matrices of $\GL_n(q)$ for $n\le 10$.
  \emph{Proc. London Math. Soc. (3) \bf60} (1990), 225--265.

\bibitem{Kaw82}
{\sc N. Kawanaka}, Fourier transforms of nilpotently supported invariant
  functions on a simple Lie algebra over a finite field. \emph{Invent. Math.
  \bf69} (1982), 411--435.

\bibitem{Li86}
{\sc M. Liebeck}, Permutation modules for rank $3$ symplectic and orthogonal
  groups. J. Algebra {\bf92} (1985), 9--15.

\bibitem{LuB}
{\sc G. Lusztig}, \emph{Characters of Reductive Groups over a Finite Field}.
  Annals of Mathematics Studies, 107. Princeton University Press, Princeton,
  NJ, 1984.

\bibitem{Lu92}
{\sc G. Lusztig}, A unipotent support for irreducible representations.
  \emph{Adv. Math. \bf94} (1992), 139--179.

\bibitem{LuSp}
{\sc G. Lusztig, N. Spaltenstein}, Induced unipotent classes. \emph{J. London
  Math. Soc. \bf19} (1979), 41--52.

\bibitem{MaU}
{\sc G. Malle}, Unipotente Grade imprimitiver komplexer Spiegelungsgruppen.
  \emph{J. Algebra \bf177} (1995), 768--826.

\bibitem{ChM}
{\sc J. Michel}, The development version of the CHEVIE package of GAP3.
  \emph{J. Algebra \bf435} (2015), 308--336.

\bibitem{SpB}
{\sc N. Spaltenstein}, \emph{Classes Unipotentes et Sous-Groupes de Borel}.
  Lecture Notes in Mathematics, 946, Springer Verlag, 1982.

\bibitem{Sp85}
{\sc N. Spaltenstein}, On the generalized Springer correspondence for
  exceptional groups. \emph{Algebraic groups and related topics (Kyoto/Nagoya,
  1983)},  317--338, North-Holland, Amsterdam, 1985.

\bibitem{Tay14}
{\sc J. Taylor}, Generalised Gelfand--Graev representations in small
  characteristics. \emph{Nagoya Math. J. \bf224} (2016), 93--167.

\end{thebibliography}
\end{document}